\documentclass[11pt]{amsart}
\usepackage[top=4.2cm, bottom=4cm, left=2.4cm, right=2.4cm]{geometry}
\usepackage[utf8]{inputenc}
\usepackage[USenglish]{babel}
\usepackage[T1]{fontenc} 
\usepackage{mathrsfs}
\usepackage{mathtools}
\usepackage{amsmath}
\usepackage{amssymb,epsfig}
\usepackage{caption}
\usepackage{subcaption}
\usepackage{comment}
\usepackage[normalem]{ulem}
\usepackage{bbold}

\usepackage{amsthm}
\usepackage[absolute]{textpos}
\usepackage{mathtools,emptypage}

\usepackage[bookmarks=true]{hyperref}
\usepackage{xcolor}
\hypersetup{
    colorlinks,
    linkcolor={red!50!black},
    citecolor={blue!50!black},
    urlcolor={blue!80!black},
}

\usepackage{todonotes}

\mathtoolsset{showonlyrefs} % uncomment to tag only the referred equations (post-production) 

\newcommand{\N}{\mathbb{N}}

\newcommand{\R}{\mathbb{R}}
\newcommand{\Z}{\mathbb{Z}}

\newcommand{\sfd}{{\sf d}}

\def\Xint#1{\mathchoice
{\XXint\displaystyle\textstyle{#1}}%
{\XXint\textstyle\scriptstyle{#1}}%
{\XXint\scriptstyle\scriptscriptstyle{#1}}%
{\XXint\scriptscriptstyle\scriptscriptstyle{#1}}%
\!\int}
\def\XXint#1#2#3{{\setbox0=\hbox{$#1{#2#3}{\int}$ }
\vcenter{\hbox{$#2#3$ }}\kern-.6\wd0}}

\def\dashint{\Xint-}

\renewcommand{\d}{{\mathrm d}}

\newcommand{\restr}[1]{\lower3pt\hbox{$|_{#1}$}}

\newcommand{\X}{{\rm X}}
\newcommand{\Y}{{\rm Y}}

\makeatletter
\newcommand{\mytag}[2]{%
  \text{#1}%
  \@bsphack
  \begingroup
    \@onelevel@sanitize\@currentlabelname
    \edef\@currentlabelname{%
      \expandafter\strip@period\@currentlabelname\relax.\relax\@@@%
    }%
    \protected@write\@auxout{}{%
      \string\newlabel{#2}{%
        {\color{black}#1}%
        {\thepage}%
        {\@currentlabelname}%
        {\@currentHref}{}%
      }%
    }%
  \endgroup
  \@esphack
}
\makeatother

\allowdisplaybreaks

\newtheorem{theorem}{Theorem}[section]

\newtheorem{corollary}[theorem]{Corollary}
\newtheorem{lemma}[theorem]{Lemma}
\newtheorem{proposition}[theorem]{Proposition}
\theoremstyle{definition}
\newtheorem{definition}[theorem]{Definition}

\newcounter{Counter}

\newcommand{\CBA}{{\sf CBA}}
\newcommand{\GCBA}{{\sf GCBA}}
\newcommand{\CAT}{{\sf CAT}}

\title[Sobolev-to-Lipschitz property of GCBA spaces]{Sobolev-to-Lipschitz property of geodesically complete spaces with curvature bounded from above}
\author[Emanuele Caputo]{Emanuele Caputo}\address[Emanuele Caputo]{
Università di Trento, Dipartimento di Matematica, Via Sommarive 14, 38123
Trento, Italy}\email{emanuele.caputo@unitn.it}

\author[Nicola Cavallucci]{Nicola Cavallucci}\address[Nicola Cavallucci]{
University of Fribourg, Department of Mathematics,  Chemin du Mus\'ee 23,  1700 Fribourg, Switzerland}\email{n.cavallucci23@gmail.com}

\author[Toni Ikonen]{Toni Ikonen}\address[Toni Ikonen]{University of Fribourg, Department of Mathematics,  Chemin du Mus\'ee 23,  1700 Fribourg, Switzerland}
\email{toni.ikonen@unifr.ch}

\keywords{Poincaré inequality, GCBA spaces, thick quasiconvexity, essential distance}
\subjclass[2020]{46E36, 30L99, 46E35}

\begin{document}

\maketitle

\begin{abstract}
    We prove that every length space with curvature bounded from above that is geodesically complete has the Sobolev-to-Lipschitz property with exponent infinity. That is, every Sobolev map in the $W^{1,\infty}$-space has a Lipschitz representative so that the Lipschitz constant coincides with the infinity energy of the map. The proof is geometric and relies on arbitrarily small perturbations of geodesics to a curve that has zero length on the singular set. The motivation is to develop the analytic theory of such spaces; in particular, our result implies that GCBA spaces satisfy the infinity Poincaré inequality and an essential assumption in the theory of Lipschitz--Volume rigidity.
\end{abstract}

\section{Introduction}

Metric spaces with curvature bounded from above ($\CBA$) have been introduced by Alexandrov in \cite{Alexandrov1951}, in terms of comparison of triangles with the simply connected two-dimensional spaces of constant sectional curvature. The flexibility of this class prevents a comprehensive study of their local geometry, see (\cite[Example 4.2]{LytchakNagano2019_GCBA}) and the discussion in their introduction. 
For these reasons, Lytchak--Nagano studied in \cite{LytchakNagano2019_GCBA} the local geometry of a suitable subclass, namely locally compact, locally geodesically complete spaces with curvature bounded from above ($\GCBA$). Geodesically complete means that every local geodesic can be extended to a longer local geodesics from both ends. Under these assumptions, for every point $x$ in a $\GCBA$ space $(\X,\sfd)$, there exists an integer $n_x\in \N$ such that the Hausdorff dimension of every small enough ball around $x$ is exactly $n_x$ (\cite[Theorem 1.2]{LytchakNagano2019_GCBA}). The quantity $n_x$ is called the \emph{dimension} of $x$. For $k\in \N$, we denote by $\X^k$ the set of points of dimension $k$. Then, for every $k\in \N$, the set $\X^k$ contains a dense and open subset $\mathcal{R}^k$ which is locally biLipschitz to $\R^k$ (\cite[Theorem 1.2]{LytchakNagano2019_GCBA}). The singular set is $\mathcal{S} := \bigcup_{k\in \N}\X^k \setminus \mathcal{R}^k$. With this stratification of $\X$, we consider the canonical measure
\begin{equation*}
    \mu \coloneqq \sum_{i=1}^\infty \mathcal{H}^k\llcorner{\X^k}
\end{equation*}
that is positive on open sets and locally finite (\cite[Theorem 1.4]{LytchakNagano2019_GCBA}). Moreover, $\mu(\mathcal{S}) = 0$, see \cite[Theorem 1.2]{LytchakNagano2019_GCBA}. We also note that the measure is \emph{infinitesimally doubling}:
\begin{equation*}
    \limsup_{ r \rightarrow 0^{+} } \frac{ \mu(B(x,2r)) }{ \mu(B(x,r)) } < \infty
    \quad\text{for almost every $x \in X$.}
\end{equation*}
We emphasize that for a connected $\GCBA$ space $\X$, the measure $\mu$ is (locally) doubling if and only if $\X$ is purely dimensional; see \cite[Theorem 5.2]{CavallucciSambusetti2020}.

The main goal of this paper is to initiate the study of analytical properties of a $\GCBA$ space as a metric measure space $(\X,\sfd,\mu)$. In general, several analytic properties are not well-understood. These include local properties such as local $(1,p)$-Poincaré inequalities (see \cite[Chapters 5-9]{HeinonenKoskelaShanmugalingamTyson2015}), other potential-theoretic tools such as capacity estimates (see \cite{HeinonenKilpelainenMartio93,BjornBjorn}), or other strong connectivity properties of the space (see \cite{caputo-2026-geometric-characterization-of-pi-spaces,caputo-cavallucci-2025-a-geometric-approach-to-poincare-inequaltiy-and-minkowski-content-of-separating-sets} and references therein). Part of the difficulty in developing an analytic toolkit in this context, is the vastness of the class. However, it is clear that e.g. the validity of local Poincaré inequality depends highly on the local geometry of the $\GCBA$ space of interest. For instance, the wedge sum of $n$-planes is a $\GCBA$ space with a local $(1,p)$-Poincaré inequality valid only for $p > n$ --- see \cite[Section 6.12]{HeinonenKoskela} for related discussion. On the other hand, this leads to the natural question whether a local $(1,\infty)$-Poincaré inequality could be valid on the whole class (as raised in \cite[Remark 7.4]{GarciaBravoIkonenZhu2025}).
To get an idea on some basic examples, consider the following.
\begin{enumerate}
    \item $\GCBA$ is stable under gluings of $\GCBA$ spaces along isometric locally complete convex subsets, see \cite[Theorem II.11.3]{BridsonHaefliger}. For example, a gluing of $n$- and $n'$-dimensional planes along a $k$-plane is of non-constant dimension if $n \neq n'$ and there exists an $n$-dimensional singular set of Hausdorff dimension $k$.%  Related examples include the gluing of tori of genus one along a point or a geodesic segment, and their universal covers.
    \item The spherical suspension of the Poincaré three-sphere is a $\GCBA$ space. Indeed, the Poincaré sphere has a canonical metric that has curvature bounded from above by one, and dilating the metric suitably and taking the spherical suspension gives the desired example. We also recall that a $\CBA$ space that is a homology manifold is always $\GCBA$, see the proof of \cite[Proposition II.5.12]{BridsonHaefliger}.
    \item Every finite simplicial complex has a metric that makes it a $\sf{CBA}$ space, see \cite[Theorem 1]{Berestovskii1983}. If it has no free faces, it is $\GCBA$ by \cite[Proposition II.5.10]{BridsonHaefliger}.
    \item There is a $\GCBA$ space that admits no triangulations, see \cite[Example 2.7]{Nagano2000}.
\end{enumerate}

To formulate our main result, we recall that a metric measure space $(\X,\sfd,\mu)$ is an \emph{$\infty$-thick length space} if for every $x,y\in \X$ and $N\subseteq \X$ with $\mu(N) = 0$, it holds that
$$\sfd(x,y) = \inf\left\{ \ell(\gamma) \colon \gamma \textup{ joins } x \textup{ to } y,\, \ell(\gamma \cap N)=0 \right\}.$$
Our main theorem shows that a length $\GCBA$ space is, in fact, an $\infty$-thick length space. 

\begin{theorem}
\label{theo:intro_main_thick_length_space}
    Let $(\X,\sfd)$ be a length ${\sf GCBA}$ space. Then $(\X,\sfd,\mu)$ is an $\infty$-thick length space.
    In particular, the following holds:
    \begin{itemize}
        \item[(i)] (Sobolev-to-Lipschitz property) for every $u \in W^{1,\infty}(\X)$, there exists a Lipschitz representative $\tilde{u}$ such that the global Lipschitz constant $\textup{Lip}(\tilde{u})$ is equal to the essential supremum $\Vert g_u\Vert_\infty$ where $g_u$ is the minimal $\infty$-weak upper gradient;
        \item[(ii)] (Poincaré inequality) $(\X,\sfd,\mu)$ satisfies a local $(1,\infty)$-Poincar\'{e} inequality. 
    \end{itemize}
\end{theorem}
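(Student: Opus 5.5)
The plan is to prove the thick length property directly; (i) and (ii) then follow from known results. It is known that in a length metric measure space with locally finite measure, $\infty$-thick length is equivalent to the Sobolev-to-Lipschitz property with equal constants. Indeed, for $u\in W^{1,\infty}$, the minimal weak upper gradient bounds $|u(\gamma(1))-u(\gamma(0))|$ for $\infty$-a.e.\ curve. Such a family can be taken to be all curves spending zero length in a suitable null set $N$ (the $\infty$-modulus null families are exactly those of curves with positive length in a null set). The local $(1,\infty)$-Poincaré inequality then follows as in Durand-Cartagena--Jaramillo--Shanmugalingam. So the real content is the following. Fix $x,y\in\X$, $\varepsilon>0$ and a $\mu$-null set $N$. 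We must produce a curve from $x$ to $y$ of length at most $\sfd(x,y)+\varepsilon$ with $\ell(\gamma\cap N)=0$.

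\textbf{Reduction to local short segments.} Since $\X$ is length and locally $\CAT(\kappa)$, take an almost-minimizing curve and subdivide it into short pieces contained in small $\CAT(\kappa)$ balls. Each piece can be replaced by the unique geodesic between its endpoints, and geodesic completeness applies in each ball. It therefore suffices to work with a geodesic $[p,q]$ inside a small ball $B$ where Lytchak--Nagano's structure theory applies. We may also perturb the endpoints slightly, at a cost of $O(\varepsilon)$, so we may assume $p,q$ lie in the top-dimensional regular part of their strata or are otherwise generic.

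\textbf{Perturbation via fattening of the geodesic.} Using geodesic completeness, extend $[p,q]$ beyond $p$ to a point $p'$. Consider the geodesics from $q'$ ($q'$ near $q$, chosen on an extension) to points $z$ in a small ball $B(p,\delta)$. These geodesics vary continuously and fill a set of positive measure. The key tool is the coarea/Fubini-type structure: the map $z\mapsto[z,q]$ pushes $\mu\llcorner B(p,\delta)$ to a family of curves, and the union of geodesics from $q$ to a positive-measure set of $z$ has controlled measure distortion. Concretely, in a $\CAT(\kappa)$ ball the geodesic contraction $\Phi_t\colon z\mapsto$ the point at fraction $t$ along $[q,z]$ is Lipschitz. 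By Lytchak--Nagano, the measure $\mu$ on the relevant stratum satisfies $\mu(\Phi_t(A))\gtrsim t^{n}\mu(A)$, a Bishop–Gromov-type lower bound valid in GCBA via the volume estimates. Fubini then gives
\[
\int_{B(p,\delta)} \ell([q,z]\cap N)\,\d\mu(z)=\int_0^1 \sfd(q,z)\,\mu(\{z:\Phi_t(z)\in N\})\,\d t ,
\]
and each integrand vanishes if $\Phi_t^{-1}(N)$ is null, i.e.\ if $\Phi_t$ does not map positive measure onto null sets. Thus for a.e.\ $z$ the geodesic $[q,z]$ meets $N$ in zero length. Concatenating the short segment $[p,z]$, which is treated recursively or chosen along a regular biLipschitz chart where Euclidean Fubini applies, yields the desired curve.

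\textbf{Main obstacle.} The hardest step is the non-degeneracy of geodesic contractions, namely $\mu(\Phi_t^{-1}(N))=0$ whenever $\mu(N)=0$, across strata of different dimensions. Geodesics starting in a high-dimensional part may pass through lower-dimensional pieces, and pieces of positive $\mathcal H^k$-measure in $\X^k$ for small $k$ are not negligible for $\mu$. To handle this, I would first push the geodesic off the singular set $\mathcal S$ and off lower strata. I would first try choosing $z$ so that the geodesic lies, except for finitely many short pieces, in a single regular set $\mathcal R^k$, where biLipschitz charts to $\R^k$ reduce the problem to Euclidean Fubini. Showing that arbitrarily small perturbations achieve zero length on $\mathcal S$ is the geometric heart (as the abstract suggests), and I expect it to use the openness and density of $\mathcal R^k$ and branching of geodesics given by geodesic completeness.
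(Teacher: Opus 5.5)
\textbf{There is a genuine gap.} Your reduction to tiny balls and your deduction of (i) and (ii) from the thick length property match the paper. So does your last step, Euclidean Fubini in charts on the regular part. The core step, however, does not go through.

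\textbf{The Fubini step over geodesics from $q$.} Your argument over the family $\{[q,z]\}_{z\in B(p,\delta)}$ needs $\mu(\Phi_t^{-1}(N))=0$ for every $\mu$-null $N$. You derive this from a lower bound $\mu(\Phi_t(A))\gtrsim t^n\mu(A)$, but no such bound is available. An upper curvature bound makes $\Phi_t$ Lipschitz with constant $\lesssim t$, which gives only the reverse inequality $\mathcal{H}^n(\Phi_t(A))\lesssim t^n\mathcal{H}^n(A)$; Bishop--Gromov is a lower-curvature phenomenon.

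The claim is in fact false. Let $\X=\R^2\vee\R$ be a plane and a line glued at a point $o$; this is a $\GCBA$ space. Take $q$ in the plane and $p$ on the line.
\begin{itemize}
\item Every geodesic $[q,z]$ with $z\in B(p,\delta)$ contains the segment $[q,o]$.
\item This segment is $\mu$-null, since it lies in $\X^2$ and is $\mathcal{H}^2$-null.
\item On the other hand, $\mu(B(p,\delta))=2\delta>0$, and $\Phi_t(B(p,\delta))\subseteq[q,o]$ for small $t$.
\end{itemize}
So no choice of $z$ gives zero length in $N=[q,o]$. Note that $[q,o)$ lies in the regular part $\mathcal{R}^2$, so pushing geodesics off $\mathcal{S}$ does not rescue this step. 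A family of geodesics sharing an endpoint can be degenerate, so the curve must be perturbed along its whole length, not by varying one endpoint. The theorem still holds in this example; it is the method that fails.

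\textbf{The missing construction.} The step you call the geometric heart is announced but not supplied: a curve of length at most $(1+\varepsilon)\sfd(x,y)$ with zero length in the singular set. Your fallback is a curve lying in a single $\mathcal{R}^k$ up to finitely many short pieces. This leaves those pieces with possibly positive length in $\mathcal{S}$ and in $N$, with no mechanism to remove it. Moreover, a single $k$ is impossible in general, as the example shows.

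This is where the paper does its real work, in Proposition~\ref{prop:recursive_construction} and Theorem~\ref{theorem-avoidance-irregular-part}:
\begin{itemize}
\item At points $\gamma(t)$ of the geodesic, the directions $v^\pm_t$ of $\gamma$ are replaced by nearby directions $w^\pm_t\in\mathcal{R}_\delta(\Sigma_{\gamma(t)}\X)$, using density of regular directions.
\item Geodesics issuing in such directions stay in $\mathcal{R}_\delta(U)$ for a short time, by Proposition~\ref{prop:properties_tiny_balls}(ii).
\item The logarithmic map gives $\sfd(\eta^\pm_t(s),\gamma(t\pm s))\le 2\varepsilon' s$.
\item A finite cover then yields good geodesic pieces plus bad connecting geodesics of total length at most $\varepsilon\sfd(x,y)$.
\item Iterating on the bad pieces, with geometrically decaying bad length, and taking an Arzel\`a--Ascoli limit gives zero length in $\mathcal{S}_\delta(U)$.
\end{itemize}
Only afterwards does the paper treat each of the countably many open parameter intervals mapped into $\mathcal{R}_\delta(U)$ via charts. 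These charts must be $(1+\eta(\delta))$-bi-Lipschitz with $\eta(\delta)\to0$; a chart with a fixed constant $L$ would cost a factor $L^2$ in length. The pieces are then glued back by infinite concatenation. Without this construction, your proposal does not prove the theorem.
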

We refer to Section \ref{sec:preliminaries} for the terminology. However, we emphasize that Theorem \ref{theo:intro_main_thick_length_space} (i) is equivalent to being an $\infty$-thick length space by \cite[Theorem 1.4]{GarciaBravoIkonenZhu2025} while Theorem \ref{theo:intro_main_thick_length_space} (ii) follows from being an $\infty$-thick length space by \cite[Theorem 1.5]{GarciaBravoIkonenZhu2025}. Hence, being an $\infty$-thick length space is the main content of Theorem \ref{theo:intro_main_thick_length_space}. It implies strong connectivity properties for weakly convex subsets of $\GCBA$ spaces.
\begin{corollary}\label{corollary-gcba-thickness}
    Let $(\X,\sfd)$ be a $\GCBA$ space. Then any open and weakly convex subset of $\X$ is an $\infty$-thick length space.
\end{corollary}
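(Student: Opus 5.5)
Let $U\subseteq \X$ be open and weakly convex; we want to show that $(U,\sfd,\mu\llcorner U)$ is an $\infty$-thick length space. We reduce this to Theorem \ref{theo:intro_main_thick_length_space}, applied to a suitable length $\GCBA$ space.

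\textbf{Step 1: the ambient structure on $U$.} Weak convexity means that any two points of $U$ are joined by curves in $U$ of length arbitrarily close to their distance. Hence $\sfd\restr{U}$ agrees with the length metric induced on $U$. Since $U$ is open in $\X$, it inherits from $\X$:
\begin{itemize}
    \item local compactness;
    \item the local upper curvature bound;
    \item local geodesic completeness, because these are local properties, and local geodesics in $U$ are local geodesics in $\X$ that can be extended a little while staying inside the open set $U$.
\end{itemize}
So $(U,\sfd)$ is a length $\GCBA$ space, in the sense that all defining properties hold locally. Moreover, the canonical measure of $U$ is $\mu\llcorner U$, because the dimension $n_x$ and the strata $\X^k$ are determined by small balls.

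\textbf{Step 2: apply the main theorem.} The main point is that Theorem \ref{theo:intro_main_thick_length_space} needs no completeness beyond what $\GCBA$ asks. If the paper's definition of $\GCBA$ allows locally complete but non-complete spaces, then Step 1 lets us apply the theorem to $U$ directly. This gives, for $x,y\in U$ and a $\mu$-null set $N$, curves in $U$ joining $x$ to $y$ whose length is at most $\sfd(x,y)+\varepsilon$ and which satisfy $\ell(\gamma\cap N)=0$.

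\textbf{Step 3: fallback argument.} If instead the theorem's proof is only available for complete spaces, we argue directly on $\X$. Fix $x,y\in U$ and $\varepsilon>0$.
\begin{enumerate}
    \item By weak convexity, pick a curve $\sigma\subseteq U$ from $x$ to $y$ with $\ell(\sigma)<\sfd(x,y)+\varepsilon$.
    \item The image of $\sigma$ is compact, so it has a positive distance $\delta$ to $\X\setminus U$.
    \item Subdivide $\sigma$ at points $x=p_0,\dots,p_m=y$ with consecutive distances much smaller than $\delta$ and than the local $\CAT$ radius.
    \item Apply the local statement of Theorem \ref{theo:intro_main_thick_length_space} to each pair $p_{i-1},p_i$. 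This is the thick-length property inside small convex balls, which is what its proof delivers, since it perturbs geodesics by arbitrarily small amounts. We obtain curves $\gamma_i$ with $\ell(\gamma_i)\le \sfd(p_{i-1},p_i)+\varepsilon/m$ and $\ell(\gamma_i\cap N)=0$.
    \item Each $\gamma_i$ stays within $\delta$ of $\sigma$ provided the perturbations are small, so it lies in $U$.
    \item Concatenate the $\gamma_i$. The result lies in $U$, misses $N$ in length, and has length at most $\ell(\sigma)+\varepsilon<\sfd(x,y)+2\varepsilon$.
\end{enumerate}

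The main obstacle is the localization in Step 3. We must ensure that the perturbed curves can be chosen uniformly close to the original segments, so that they remain inside $U$. I would extract this closeness from the construction in the main proof: the perturbations there are arbitrarily small, and a short curve between nearby points is automatically contained in a small ball.
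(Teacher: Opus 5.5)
Your proposal is correct and follows the paper's route. The paper observes that $(C,\sfd|_{C\times C},\mu\llcorner C)$ is itself a geodesic length $\GCBA$ space: its definition of $\GCBA$ asks only for separability, local compactness, geodesic completeness and local $\CAT(\kappa)$ balls, not completeness, so your Step 2 applies directly. It then invokes Theorem \ref{theo:intro_main_thick_length_space}, and your fallback Step 3 is the concatenation argument behind the last assertion of Lemma \ref{lemma-bootstrapping-the-local-qc-property}, where, as you note, containment in $U$ follows from the length bound alone.
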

Here a subset $C \subseteq \X$ is \emph{weakly convex} if every $x, y \in C$ can be joined with a geodesic that lies in $C$ and has length equal to that of $\sfd(x,y)$. In particular, the restriction $( C, \sfd|_{ C \times C }, \mu\llcorner{C} )$ is a geodesic length $\GCBA$ space. Basic examples include small enough balls in any $\GCBA$ space. It is also clear that the $\GCBA$ property is stable when passing to the intrinsic distance on an open and connected subset. Thus we have the following corollary.
\begin{corollary}\label{corollary-gcba-thickness}
    Let $(\X,\sfd)$ be a $\GCBA$ space. Then any connected and open subset of $\X$ equipped with the intrinsic distance and the restriction of the reference measure is an $\infty$-thick length space.
\end{corollary}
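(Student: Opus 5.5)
The plan is to deduce this second corollary directly from Theorem \ref{theo:intro_main_thick_length_space}. The only thing to check is that the new space is again a length $\GCBA$ space, and that its canonical measure is the restriction of $\mu$. Let $U \subseteq \X$ be open and connected, and let $\sfd_U$ be the intrinsic distance on $U$: the infimum of lengths of curves in $U$ joining two points.

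\textbf{Step 1: basic properties of $(U,\sfd_U)$.} Since $\X$ is $\GCBA$, every point has a neighbourhood that is a geodesic space (balls of radius below the injectivity scale $\frac{1}{2}\min\{\pi/\sqrt{\kappa}, \cdot\}$ are uniquely geodesic and convex). In particular $\X$ is locally path connected by rectifiable curves. An open connected subset is then rectifiably path connected, so $\sfd_U$ is finite. Moreover, for each $x\in U$ there is $r>0$ with $B(x,2r)\subseteq U$ convex. On $B(x,r)$ this gives $\sfd_U=\sfd$, because the geodesic between two points of $B(x,r)$ stays in $B(x,2r)\subseteq U$.

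Hence $(U,\sfd_U)$ is locally isometric to $(\X,\sfd)$. It is a length space by construction, and it has the same topology as $U$.

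\textbf{Step 2: the $\GCBA$ conditions.} All three defining conditions are local, so they transfer through the local isometry of Step 1.
\begin{itemize}
\item Local compactness and curvature $\le\kappa$ hold on small balls, which are isometric to balls of $\X$.
\item Local geodesic completeness concerns extending local geodesics by a little at each end. A local geodesic in $U$ is a local geodesic in $\X$. Its extension by a short piece, staying inside a small convex ball contained in $U$, is again a local geodesic of $(U,\sfd_U)$.
\end{itemize}
So $(U,\sfd_U)$ is a length $\GCBA$ space.

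\textbf{Step 3: identify the measure.} Two facts are needed.
\begin{itemize}
\item The dimension $n_x$ and the strata $\X^k$ are defined through small balls. So $U^k=\X^k\cap U$.
\item Hausdorff measures are determined locally and coincide under local isometries. Via a countable cover of $U$ by balls on which $\sfd_U=\sfd$, this gives $\mathcal H^k_{\sfd_U}\llcorner U^k=\mathcal H^k_{\sfd}\llcorner(\X^k\cap U)$.
\end{itemize}
Thus the canonical measure of $(U,\sfd_U)$ is $\mu\llcorner U$. Applying Theorem \ref{theo:intro_main_thick_length_space} now gives the claim.

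The main care point is Step 2, the geodesic completeness: one must use that extensions can be taken arbitrarily short, so that they remain in $U$. Everything else is routine locality.
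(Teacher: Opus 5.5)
Your proposal is correct and follows the paper's route. The paper simply notes that the $\GCBA$ property is stable when passing to the intrinsic distance on an open connected subset, then invokes Theorem~\ref{theo:intro_main_thick_length_space}. Your Steps 1--3 supply the locality details the paper leaves implicit: the local isometry $\sfd_U=\sfd$ on small balls, the transfer of local geodesic completeness, and the identification of the canonical measure with $\mu\llcorner U$. The one condition you do not check, separability of $(U,\sfd_U)$, follows at once because $\sfd$ and $\sfd_U$ induce the same topology on $U$.
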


The earliest constructions of $\infty$-thick length spaces were studied by De Cecco--Palmieri in the context of Lipschitz manifolds in \cite{DeCeccoPalmieri88}. In the setting of metric measure spaces, the notion has previously appeared in the work of Juutinen--Shanmugalingam \cite{Juut:Shan:06} and Durand-Cartagena--Jaramillo--Shanmugalingam \cite{DurandJaramilloShanmugalingam2012,DurandShanmugalingamWilliams2012,durand-cartagena-jaramillo-shanmugalingam-2016-geometric-characterizations-of-p-poincare-inequalities-in-the-metric-setting} when developing the theory of $\infty$-harmonic functions and $(1,\infty)$-Poincaré inequalities in metric measure spaces, respectively. Moreover, it was recently proved \cite[Theorem 1.4]{GarciaBravoIkonenZhu2025} by the third author et al. that in a locally complete metric measure space equipped with an infinitesimally doubling measure, being $\infty$-thick length space is equivalent to Theorem \ref{theo:intro_main_thick_length_space} (i). Note that a closely related Sobolev-to-Lipschitz property is important in the local geometry of metric measure spaces satisfying synthetic Ricci curvature lower bounds (\cite{AmbrosioGigliSavare2014,AmbrosioGigliSavare2015}); see also applications to the structure of minimal discs \cite{CreutzSoultanis2020}. As far as the authors are aware, a version of the Sobolev-to-Lipschitz property was first introduced \cite[Theorem 1.5]{RocknerSchied1999} in the metric setting. 

The local Poincaré inequalities have played a critical role on the analysis of metric measure spaces since the works of Heinonen--Koskela \cite{HeinonenKoskela} and Cheeger \cite{Cheeger99}; see \cite{HeinonenKoskelaShanmugalingamTyson2015} for further discussion. As mentioned above, the $(1,\infty)$-Poincaré was introduced in \cite{DurandJaramilloShanmugalingam2012}. This is the weakest of the local $(1,p)$-Poincaré inequalities, and thus Theorem \ref{theo:intro_main_thick_length_space} can be understood as a natural start for developing local analysis on $\GCBA$ spaces.

The conclusion of Theorem \ref{theo:intro_main_thick_length_space} is also directly related to recent developments in the so-called \emph{Lipschitz-volume rigidity problem}, see \cite{Zuest2024,marti2025lipschitzvolumerigidityproblemmetric} for an overview of the topic.
The general setup is the following. Let $f\colon \X \to \Y$ be a $1$-Lipschitz surjective map between two metric spaces such that $0<\mathcal{H}^n(\X)= \mathcal{H}^n(\Y) <\infty$ for some $n\in \N$. The Lipschitz-volume rigidity problem asks for conditions on $\X$ and $\Y$ under which $f$ is an isometry. A necessary condition for the validity of the statement is that $(\Y,\sfd_\Y,\mathcal{H}^n)$ is an $\infty$-thick length space, as observed in \cite[Example 4.1]{Zuest2024}, even under very regular assumptions, namely when $\Y$ is a smooth $n$-manifold equipped with a length distance locally biLipschitz to $\R^n$. Theorem \ref{theo:intro_main_thick_length_space} will be used in a forthcoming paper by the first two authors and Marti \cite{CaputoCavallucciMarti2026} in establishing Lipschitz-volume rigidity with certain $\GCBA$ targets.

\subsection{Strategy of the proof}

We first perform a simple reduction by proving a local-to-global property of the thick length space property in any length space. This allows us to reduce the analysis to a tiny ball in a $\GCBA$ space, i.e. a sufficiently small ball which has compact closure and is $\CAT(\kappa)$ for some $\kappa \in \R$. We refer to Section \ref{subsec:GCBA} for more details.
Given a geodesic lying in a tiny ball, we define an iterative procedure to constructing a curve connecting the endpoints whose length is almost that of the geodesic and which have zero length in the singular set.
Then, we perturb the curve in the regular set by using the locally biLipschitz equivalence to the Euclidean geometry in such a way that the new curve spends zero length in a given $\mu$-negligible set. Since the resulting curve has zero length on the singular part, this results in the desired curve. Controlling the length of the perturbation is key in the proof of Theorem \ref{theo:intro_main_thick_length_space}.

The following example illustrates the importance of the role of the structure theory of the singular set of a $\GCBA$ spaces in Theorem \ref{theo:intro_main_thick_length_space}. Given a lower semicontinuous weight $\omega \colon \R^n \to [0,1]$ on $\R^n$, we define the \emph{weighted distance} $\sfd_\omega(x,y) = \inf\{ \int_{\gamma} \omega \,ds \colon \text{ $\gamma$ joins $x$ to $y$ }\}$. Let $E \subset \R^n$ be a compact Euclidean segment and $\omega = 1$ in $\R^n \setminus E$ and $\omega = c$ in $E$ for $c \in (0,1)$. Then $( \R^n, \sfd_\omega )$ is bi-Lipschitz equivalent to $\R^n$ through the identity map, locally isometric to $\R^n$ on $\R^n \setminus E$, but it is not an $\infty$-thick length space when equipped with the $n$-dimensional Hausdorff measure. Indeed, if $\ell(E)$ is the Euclidean length of $E$, its length under $\sfd_\omega$ is $c\ell(E)$ while any curve having the same end points and having zero length on $E$ has length at least $\ell(E)$. A similar example was considered by Züst in \cite[Example 4.1]{Zuest2024} (see \cite{ikonen-romney-2022-quasiconformal-geometry-and-removable-sets-for-conformal-mappings} for analysis of more general weighted distances and applications to complex analysis and the uniformization problem, when $n = 2$).

\subsection{Acknowledgements}
E. Caputo was supported by the European Union’s Horizon 2020 research and innovation programme
(Grant agreement No. 948021) and the Swiss National Science Foundation (grant 200021-204501 ‘Regularity of sub-Riemannian geodesics and applications’) and warmly thanks the University of Fribourg for the collaborative working atmosphere. T. Ikonen was supported by the Swiss National Science Foundation grant 212867.

\section{Preliminaries}
\label{sec:preliminaries}
Let $(\X,\sfd)$ be a metric space, $x\in \X$ and $r>0$. We denote by $B(x,r)$ (resp. $\overline{B}(x,r)$) the \emph{open} (resp. \emph{closed}) \emph{ball} centered at $x$ and of radius $r$. A \emph{curve} is a continuous map $\gamma \colon [a,b] \to \X$. The starting and end points of $\gamma$ are denoted by $\alpha(\gamma) := \gamma(a)$ and $\omega(\gamma) := \gamma(b)$, respectively. We say that $\gamma$ joins $\alpha(\gamma)$ to $\omega(\gamma)$. The \emph{length} of $\gamma$ is
\begin{equation*}
    \ell(\gamma):= \sup \left\{ \sum_{i=0}^{N-1} \sfd(\gamma(t_i), \gamma(t_{i+1})), \,\{ t_i\}_{i=1}^N\subseteq [a,b],\,N \in \mathbb{N},\,
    a=t_0 < t_1 <\dots < t_N =b\right\}.
\end{equation*}
A continuous curve $\gamma\colon [a,b] \to \X$ is said to be \emph{rectifiable} if $\ell(\gamma)<\infty$. The curve $\gamma$ has \emph{constant-speed $\ell$} if $\ell( \gamma|_{ [s,t] } ) = \ell |t-s|$ for every $a \leq s < t \leq b$. Every rectifiable curve has a constant-speed reparametrization. A rectifiable curve is a \emph{length-minimizer} if $\ell(\gamma) = \sfd(\alpha(\gamma), \omega(\gamma))$. We say that a curve $\gamma$ is a \emph{geodesic} if $\sfd( \gamma(s), \gamma(t) ) = |t-s|$ for every $a \leq s < t \leq b$. Notice that every geodesic is a length-minimizer but not vice versa --- however, a unit speed reparametrization of a length-minimizing curve is a geodesic. A rectifiable curve $\gamma$ is a \emph{local geodesic} if for every $t \in [a,b]$, there exists $\varepsilon > 0$ such that $\sfd( \gamma(t'), \gamma(t'') ) = |t''-t'|$ for every $t',t'' \in [a,b]$ and $|t'-t|+|t''-t| \leq \varepsilon$. A local geodesic is not necessarily a length-minimizer. 

The \emph{concatenation} of two curves $\gamma \colon [a_1,b_1]\to \X$ and $\eta \colon [a_2,b_2]\to \X$ with $\eta(a_2)=\gamma(b_1)$ is the curve $\gamma \star \eta \colon [a_1,b_1+b_2-a_2]$ defined as $(\gamma \star \eta)(t) = \gamma(t)$ if $t \in [a_1,b_1]$ and $(\gamma \star \eta)(t)=\eta(t-(b_1-a_1))$ if $t \in [b_1,b_1+b_2-a_2]$. The concatenation is an associative operation and we will use this property below without further mention.
Let $(a,b)=\cup_{i\in \Z} [a_i,b_i]$ with $b_i=a_{i+1}$ for every $i \in \mathbb{Z}$. Let $\gamma_i \colon [a_i,b_i] \to \X$ be a sequence of curves such that $\gamma_i(b_i)=\gamma_{i+1}(a_{i+1})$ for $i \in \Z$. Then the \emph{infinite concatenation} of the family $\{\gamma_i\}_{i \in \Z}$ is given by the curve $\eta \colon (a,b)\to \X$ defined by $\eta(t)=\gamma_i(t)$ if $t \in [a_i,b_i]$.

A metric space $(\X,\sfd)$ is a \emph{length space} if $\sfd(x,y)$ coincides with the infimum over the length of curves joining $x$ to $y$, for every $x,y\in \X$. The space is \emph{$R$-geodesic} if for every $x,y\in \X$ with $\sfd(x,y)\le R$, there exists a geodesic joining $x$ to $y$. 
It is \emph{geodesically complete} if every local geodesic $\gamma \colon [a,b] \to \X$ can be extended to a local geodesic $\gamma \colon [a-\varepsilon,b+\varepsilon] \to \X$ for some $\varepsilon >0$. A subset $U \subset \X$ is \emph{weakly $R$-convex} if, for every $x,y \in U$ with $\sfd(x,y)\leq R$, there exists a geodesic in $U$ joining $x$ to $y$; in other words, it is weakly $R$-convex if $(U,\sfd\restr{U \times U})$ is $R$-geodesic. If also every such geodesic lies in $U$, then $U$ is \emph{$R$-convex}. We omit $R$ from the notation if $R = \infty$.

A \emph{metric measure space} is a triple $(\X,\sfd,\mu)$ where $(\X,\sfd)$ is a metric space and $\mu$ is a nonnegative Borel measure which is locally finite. Given $B \subseteq \X$ Borel, we define $\mu\llcorner{B}(A):=\mu(A\cap B)$ for every Borel $A \subseteq \X$, which is a nonnegative locally finite Borel measure on $B$.

We denote by $\mathcal{H}^k$ the $k$-dimensional Hausdorff measure on a metric space $(\X,\sfd)$. %Moreover, ${\rm dim}_H(\X)$ denotes the Hausdorff dimension of $\X$.

\subsection{Analysis on metric spaces}

Let $u \colon \X \to \R$ be a Lipschitz function. We denote by ${\rm Lip}(u)$ the Lipschitz constant of $u$, defined as $${\rm Lip}(u):=\sup_{y\neq x}\frac{|u(x)-u(y)|}{\sfd(x,y)}.$$

Given a function $u \colon \X \to \R$, a Borel function $g \colon \X \to [0,\infty]$ is an \emph{upper gradient} of $u$ if $$|u(\alpha(\gamma))-u(\omega(\gamma))|\le \int_\gamma g \,\d s$$ for any rectifiable curve $\gamma$. The integral on the right is the \emph{path integral of $g$ over $\gamma$}. By definition, the value is set to be infinite if $\gamma$ is not rectifiable and set to be zero if $\gamma$ is a constant curve. For non-trivial rectifiable curves, the value is defined by integration over the \emph{length measure} of $\gamma$ on its domain; we refer to \cite[Chapter 5]{HeinonenKoskelaShanmugalingamTyson2015} for details.

Let $(\X,\sfd,\mu)$ be a metric measure space.
For a function $u \colon \X \to \R$, an \emph{$\infty$-weak upper gradient of $u$} is an element in the $L^{\infty}(\mu)$-completion of the $L^{\infty}(\mu)$-classes of essentially bounded upper gradients of $u$. Standard arguments show that the set of $\infty$-weak upper gradients of $u$ is a closed lattice, cf. \cite[Lemma 4.1 and Proposition 4.5]{maly-2013-minimal-weak-upper-gradeints-in-newtonian-spaces-based-on-quasi-banach-function-lattices}, so there exists a unique element $g_u\in L^\infty(\mu)$, called the \emph{minimal $\infty$-weak upper gradient of $u$}, such that $g_u \le g$ $\mu$-a.e.\ for any $\infty$-weak upper gradient $g$.

We say that an equivalence class $u \in L^{\infty}(\mu)$ belongs to $W^{1,\infty}(\X)$ if it has a representative with an $\infty$-weak upper gradient, and we review basic properties based on \cite{maly-2013-minimal-weak-upper-gradeints-in-newtonian-spaces-based-on-quasi-banach-function-lattices}. If there are two representatives of the same equivalence class in $L^{\infty}(\mu)$ with an $\infty$-weak upper gradient, then the minimal $\infty$-weak upper gradient of the difference is zero and the minimal $\infty$-weak upper gradients coincide. This gives well-posedness for the definition of $W^{1,\infty}(\X)$ and we may equip it with the norm
\begin{align*}
    \|u\|_{ W^{1,\infty}(\X) } = \|u\|_{L^{\infty}(\mu)} + \|g_u\|_{ L^{\infty}(\mu) }.
\end{align*}
Standard arguments show that $W^{1,\infty}(\X)$ is a Banach space.

We say that a metric measure space $(\X,\sfd,\mu)$ satisfies the \emph{Sobolev-to-Lipschitz property} if every $u \in W^{1,\infty}(\X)$ has a Lipschitz representative $\tilde{u}$ such that ${\rm Lip}(\tilde{u})=\|g_u\|_{L^\infty(\mu)}$. We say that $(\X,\sfd,\mu)$ satisfies a local $(1,\infty)$-Poincar\'{e} inequality if there exist constants $C > 0$, $\lambda \ge 1$ such that 
\begin{equation*}
    \dashint_{B(x,r)} \Big| u(y) -\dashint_{B(x,r)} u \,\d \mu \Big|\,\d \mu(y) \le C r \| g \|_{L^\infty(\mu\llcorner B(x,\lambda r) )}
\end{equation*}
for any ball $B(x,r) \subseteq \X$ with $\mu( B(x,r) ) < \infty$ and every $u \colon X \to \R$ integrable on $B(x,r)$ with an $\infty$-upper gradient $g \colon \X \to \R$. This coincides with the definition introduced in \cite[Definition 1.2]{DurandJaramilloShanmugalingam2012}.

\subsection{Very thick quasiconvexity}
We recall very $\infty$-thick convexity that was introduced in \cite{durand-cartagena-jaramillo-shanmugalingam-2016-geometric-characterizations-of-p-poincare-inequalities-in-the-metric-setting} and its localization from \cite{GarciaBravoIkonenZhu2025}. To this end, let $(\X,\sfd)$ be a metric space, $N\subseteq \X$ be any set and consider a rectifiable curve $\gamma \colon [a,b] \to \X$. We say that $\gamma$ has \emph{zero/negligible length in $N$} if the preimage $\gamma^{-1}(N)$ is negligible for the length measure of $\gamma$; we denote this by $\ell( \gamma \cap N ) = 0$. We recall that the length measure of $\gamma$ on $[a,b]$ is an outer measure obtained from a Carathéodory's construction using the length functional $\nu([s,t]):=\ell(\gamma\restr{[s,t]})$. Hence the definition is well-posed. When $N$ is Borel, this corresponds to the path integral $\int_\gamma \chi_N \,\d s=0$.  %Using an outer measure, it is possible to consider negligible sets. Using this, we say that $\gamma$ has \emph{zero length on $N \subseteq \X$} if the preimage $\gamma^{-1}(N)$ is negligible with respect to the length measure.

\begin{definition}[Very thick quasiconvexity]
\label{definition-thick-quasiconvexity}
Let $C \geq 1$ and $r > 0$. Let $(\X, \sfd, \mu )$ be a metric measure space. We say that $\X$ is \emph{very $\infty$-thick $(C,r)$-quasiconvex at $x$} if for every $y, z \in B( x, r )$ and every $\mu$-negligible set $N \subseteq \X$, there exists a rectifiable curve $\gamma$ joining $y$ to $z$ such that $\ell( \gamma ) \leq C \sfd(y,z)$ and $\ell(\gamma \cap N)=0$. We say that $\X$ is \emph{very $\infty$-thick $C$-quasiconvex} if we may take $r = \infty$.
\end{definition} 
We recall that a metric measure space is an $\infty$-length space if it is very $\infty$-thick $(1+\varepsilon)$-quasiconvex for every $\varepsilon > 0$.
We have the following local-to-global property of very $\infty$-thick quasiconvexity.

\begin{lemma}
\label{lemma-bootstrapping-the-local-qc-property}
    Let $(\X,\sfd,\mu)$ be a metric measure space and $\varepsilon > 0$. Suppose that for every $x \in \X$, there exists $r_x > 0$ such that $\X$ is very $\infty$-thick $(1+\varepsilon,r_x)$-quasiconvex at $x$. Then
    each connected component of $\X$, endowed with the length distance and the restriction of the measure, is very $\infty$-thick $(1+\varepsilon+\varepsilon')$-quasiconvex for every $\varepsilon' > 0$.
    Moreover, if $U \subseteq \X$ is an open weakly convex set, then $U$ is very $\infty$-thick $(1+\varepsilon)$-quasiconvex.
\end{lemma}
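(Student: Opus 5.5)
Given $y,z$ in a connected component of $\X$ with finite length distance $\sfd_\ell(y,z)$, together with a $\mu$-negligible set $N$ and $\varepsilon'>0$, the plan is to build a curve joining $y$ to $z$ that has zero length in $N$ and length at most $(1+\varepsilon)\sfd_\ell(y,z)+\varepsilon'\sfd_\ell(y,z)$.

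\textbf{Step 1: choose a good curve and partition it.} Pick a rectifiable curve $\gamma$ joining $y$ to $z$ with $\ell(\gamma)\le(1+\delta)\sfd_\ell(y,z)$, where $\delta$ is small. Its image $K$ is compact and is covered by the open balls $B(x,r_x/2)$, $x\in K$. A Lebesgue number argument on $[a,b]$ gives a partition $a=t_0<\dots<t_M=b$ such that each consecutive pair $\gamma(t_{i}),\gamma(t_{i+1})$ lies in a single ball $B(x_i,r_{x_i})$.

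\textbf{Step 2: replace each piece.} By the hypothesis, very $\infty$-thick $(1+\varepsilon,r_{x_i})$-quasiconvexity at $x_i$ gives a curve $\gamma_i$ from $\gamma(t_i)$ to $\gamma(t_{i+1})$ with
\[
\ell(\gamma_i)\le(1+\varepsilon)\,\sfd(\gamma(t_i),\gamma(t_{i+1}))\le(1+\varepsilon)\,\ell(\gamma|_{[t_i,t_{i+1}]})
\qquad\text{and}\qquad \ell(\gamma_i\cap N)=0 .
\]
The concatenation $\gamma_0\star\cdots\star\gamma_{M-1}$ has zero length in $N$, since length measure is additive over finitely many pieces. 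Its length is at most $(1+\varepsilon)(1+\delta)\sfd_\ell(y,z)$, which is at most $(1+\varepsilon+\varepsilon')\sfd_\ell(y,z)$ once $\delta$ is small. The inequality $\sfd\le\sfd_\ell$ is what makes the chaining estimate close.

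\textbf{Step 3: check that the measure is unchanged.} One must also confirm that passing to $\sfd_\ell$ does not change which curves are rectifiable or their lengths: length is the same under $\sfd$ and $\sfd_\ell$. Negligibility of $N$ does not change either, since $\mu$ is the same measure. The one subtlety is that $\sfd_\ell$-balls and the topology may differ from those of $\sfd$. This does not matter here, because the argument only uses the compactness of $[a,b]$ and the continuity of $\gamma$ into $(\X,\sfd)$.

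\textbf{Step 4: the weakly convex case.} For $U$ open and weakly convex, take $y,z\in U$ and a geodesic $\gamma\subset U$ of length $\sfd(y,z)$, so no $\delta$-loss occurs. The issue is that the local replacement curves $\gamma_i$ must stay inside $U$. Since $U$ is open and $\gamma$ is compact, one can refine the partition so that each pair of consecutive points lies in a ball $B(x_i,\rho_i)$ with $\rho_i\le r_{x_i}$ and $(1+\varepsilon)\rho_i$-neighbourhood contained in $U$. Then every $\gamma_i$ has length at most $(1+\varepsilon)\cdot 2\rho_i$ and stays within that distance of $\gamma(t_i)$, so it remains in $U$ once the radii are shrunk by a fixed factor. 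The concatenation then lies in $U$, has zero length in $N$, and has length at most $(1+\varepsilon)\sfd(y,z)$.

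The main obstacle I expect is this containment of the local replacement curves in $U$ (and, more generally, confirming that the pointwise radius $r_x$ can be used uniformly along a compact curve). The fix I would try first is the neighbourhood-shrinking argument in Step 4, using that a curve of length $L$ stays within distance $L$ of its starting point.
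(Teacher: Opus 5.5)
Your proposal is correct and follows essentially the paper's argument: a Lebesgue-number partition of a (near-)minimizing curve, replacement of each short piece by the locally provided curve, and concatenation. Your neighbourhood-shrinking step in the weakly convex case supplies the detail that the paper only sketches as ``a similar concatenation argument''.

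One point you leave implicit is that $\sfd_\ell$ is automatically finite on each connected component. The hypothesis puts every $B(x,r_x)$ inside the rectifiable-path component of $x$, so these components are open, and hence they coincide with the connected components.
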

\begin{proof}
    We will prove the following claim: for every rectifiable curve $\theta \colon [0,1] \to X$ and every negligible set $N \subseteq \X$, there exists a curve $\gamma \colon [0,1] \to X$ with the same endpoints, with $\ell(\gamma) \leq (1+\varepsilon)\ell(\theta)$, and $\gamma$ having negligible length in $N$. We later show how the statements of the lemma follows.

    Consider a non-constant rectifiable curve $\theta \colon [0,1] \to \X$. We cover $K = \theta( [0,1] )$ by the balls $\{ B( x, r_x ) \}_{ x \in K }$. By Lebesgue's number lemma, there exists $k \in \mathbb{N}$ such that if $0 \leq s < t \leq 1$ satisfy $t-s \leq \frac{1}{k}$, then $\theta( [s,t] ) \subseteq B( x, r_x )$ for some $x \in K$. Let $N \subseteq \X$ be $\mu$-negligible and consider a partition $t_j = j/k$ for $j = 0, \dots, k$ of $[0,1]$. For each $j = 0, \dots, k-1$, we consider a rectifiable curve $\gamma_j \colon [t_j, t_{j+1}] \to \X$ joining $\theta( t_j )$ to $\theta( t_{j+1} )$ with $\ell( \gamma_j ) \leq (1+\varepsilon) \sfd(\theta(t_j),\theta(t_{j+1}))$ and having zero length in $N$. The curve $\gamma \colon [0,1] \to \X$ defined by $\gamma :=\gamma_0 \star \gamma_1 \cdots \star \gamma_{k-1}$ has zero length in $N$ and 
    $\ell( \gamma ) =\sum_{j=0}^{k-1} \ell(\gamma_j)\leq (1+\varepsilon)\sum_{j=0}^{k-1} \sfd(\theta(t_j),\theta(t_{j+1})) \leq (1+\varepsilon) \sum_{j = 0}^{k-1} \ell( \theta|_{ [t_j,t_{j+1}] } ) = (1+\varepsilon)\ell(\theta)$. This proves the claim.

    Now, we show that each connected component with the length distance has the stated property. For this, fix $x, y \in \X$ in a connected component and a $\mu$-negligible set $N \subset \X$. 
    From the claim, it follows that, denoting by $\sfd_\ell$ the intrinsic distance on a connected component $C$, we have
    \begin{equation*}
        \inf\{ \ell(\gamma):\, \alpha(\gamma)=x,\,\omega(\gamma)=y,\, \ell(\gamma \cap N)=0\}\le (1+\varepsilon) \sfd_\ell(x,y).
    \end{equation*}
    The statement about the weakly convex open set follows by a similar concatenation argument as above.%It remains to verify the claim about the weakly convex open set $U$: consider a geodesic $[x,y] \subset U$ and partition the geodesic into segments $I_0 = [x,x_1], I_1 =  [x_1,x_2], \dots, I_j = [x_j,x_{j+1}], \dots I_{n} = [x_n,x]$ such that the $(2\varepsilon) \ell( I_j )$-neighbourhood of $I_j$ is contained in $U$. Now replace is segment $I_j$ by the curve from the claim and concatenate the curves to conclude.
\end{proof}

\subsection{$\GCBA$ spaces}
\label{subsec:GCBA}

Given $\kappa \in \R$, we define $D_\kappa := \pi/\sqrt{\kappa}$ when $\kappa >0$ and $D_\kappa = \infty$ otherwise. A complete metric space $(\X,\sfd)$ is $\CAT(\kappa)$ if it is $D_\kappa$-geodesic and every geodesic triangle of perimeter at most $2D_\kappa$ is smaller than its comparison triangle in the simply connected, $2$-dimensional Riemannian manifold with sectional curvature $\kappa$. We refer to \cite{BridsonHaefliger} for basic properties of such spaces.

A metric space $(\X,\sfd)$ is a $\GCBA$ space if it is separable, geodesically complete, locally compact and if for every point $x\in \X$ there exist $\kappa_x\in \R$ and $0< r_x \le D_{\kappa_x}$ such that $\overline{B}(x,r_x)$ is $\CAT(\kappa_x)$. A ball $B(x,r) \subseteq \X$ is called a \emph{tiny ball} if $\overline{B}(x,10r)$ is compact and $\textup{CAT}(\kappa)$ for some $\kappa \in \R$, having radius $r\le \min\{1,D_\kappa/100\}$. Let $x\in \X$. The {\em space of directions} at $x$ is 
$$\Sigma_x\X := \{\gamma\colon [a,b] \to \X\,:\, \gamma \text{ geodesic with } \alpha(\gamma)=x\}/\sim,$$
where $\gamma \sim \gamma'$ if and only if $\angle_x(\gamma,\gamma') = 0$; for the definition of the {\em Alexandrov angle} between two   geodesics, we refer to \cite[Definition I.1.12]{BridsonHaefliger}. We refer to the resulting equivalence class $[\gamma]$ as the direction of $\gamma$ (at $x$).
The space $\Sigma_x\X$ equipped with the distance $\angle_x(\cdot,\cdot)$ is a compact, geodesically complete, $\CAT(1)$ space of diameter $\pi$ if $x$ is not isolated (\cite[Corollary 5.9]{LytchakNagano2019_GCBA}). The Euclidean cone over the space of directions, namely $T_x\X := \textup{Cone}(\Sigma_x\X) = [0,+\infty)\times \Sigma_x\X /_\sim$, $(0,v)\sim(0,w)$ for every $v,w\in \Sigma_x\X$, with the corresponding metric $\sfd_T$ (see \cite[Definiton I.5.6]{BridsonHaefliger}), is a proper, geodesically complete, $\CAT(0)$ space which is called the {\em tangent space} at $x$. The tip of the cone $T_x\X$ is denoted by $o_x$. 
The triple $(T_x\X,\sfd_T,o_x)$ is the Gromov-Hausdorff limit of the sequence of pointed metric spaces $(\X,\lambda_i\sfd,x)$, for every $\lambda_i \to \infty$ (see \cite[Corollary 5.7]{LytchakNagano2019_GCBA} and \cite[Lemma 2.5]{CavallucciSambusetti2020}). This is proved as follows: for every tiny ball $B(x,r)$, there is a well-defined logarithmic map $\log_x\colon B(x,r) \to T_x\X$, $\log_x(y) := (\sfd(x,y),[\gamma_{xy}])$, where $\gamma_{xy}$ is the unique geodesic joining $x$ to $y$, that satisfies the property stated in the subsequent lemma. 

\begin{lemma}[{\cite[Lemma 5.5]{LytchakNagano2019_GCBA}}]
\label{lemma:logarithmic_map_convergence_to_the_tangent}
    Let $(\X,\sfd)$ be a $\GCBA$ space and let $x\in \X$. For every $\varepsilon > 0$, there exists $0<r_\varepsilon(x)<1$ such that for every $0< r\le r_\varepsilon(x)$ and every $x_1,x_2\in B(x,r)$, it holds that
    $$\vert \sfd(x_1,x_2) - \sfd_T(\log_x(x_1),\log_x(x_2)) \vert \le \varepsilon r,$$
    and that $\log_x( \overline{B}(x,r) ) = \overline{B}(o_x,r)$.
\end{lemma}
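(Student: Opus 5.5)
This is Lemma 5.5 of Lytchak--Nagano, which the paper only quotes, so what follows is a reconstruction of how I would prove it, not the authors' argument. Fix a tiny ball $B(x,r_0)$ around $x$, with $\overline{B}(x,10r_0)$ being $\CAT(\kappa)$. Inside it, geodesics from $x$ are unique and local geodesics are geodesics. I would prove the two assertions separately.

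\emph{Surjectivity of $\log_x$.} Let $v\in\Sigma_x\X$, represented by a geodesic $\gamma$ starting at $x$. By geodesic completeness, $\gamma$ extends as a local geodesic, and in the $\CAT(\kappa)$ ball local geodesics of length less than $D_\kappa$ are geodesics. So $\gamma$ extends to a geodesic of length $r_0$. Then $\log_x(\gamma(s))=(s,v)$ for every $s\le r_0$, which gives $\log_x(\overline{B}(x,r))=\overline{B}(o_x,r)$ for $r\le r_0$. The inclusion $\subseteq$ is trivial.

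\emph{Lower bound $\sfd \ge \sfd_T - \varepsilon r$.} Take $x_1,x_2\in B(x,r)$ with $s_i=\sfd(x,x_i)$, and let $\theta=\angle_x(x_1,x_2)$, so $\sfd_T(\log_x x_1,\log_x x_2)^2=s_1^2+s_2^2-2s_1s_2\cos\theta$. By the $\CAT(\kappa)$ condition, $\theta$ is at most the $\kappa$-comparison angle $\tilde\angle^\kappa_x(x_1,x_2)$. The $\kappa$-law of cosines differs from the Euclidean one by $O(|\kappa| r^3)$, uniformly in the configuration. Hence $\sfd(x_1,x_2)\ge \sfd_T(\log_x x_1,\log_x x_2)-C|\kappa|r^2$, and $C|\kappa| r^2\le \varepsilon r$ once $r$ is small.

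\emph{Upper bound $\sfd \le \sfd_T + \varepsilon r$.} This is the main obstacle. It amounts to showing that the Euclidean comparison angle $\tilde\angle_x(\gamma_1(t_1),\gamma_2(t_2))$ converges to $\angle_x(\gamma_1,\gamma_2)$ as $t_1,t_2\to 0$, \emph{uniformly} over all pairs of directions. For a single pair this is the definition of the Alexandrov angle, and by $\CAT(\kappa)$ monotonicity the convergence is essentially monotone in $(t_1,t_2)$ up to an $O(\kappa t^2)$ correction. To get uniformity I would use that $\Sigma_x\X$ is compact and that every direction is represented by a geodesic of length $r_0$. Let $\mathcal G$ be the family of these geodesics; it is compact by Arzel\`a--Ascoli and uniqueness of geodesics in the tiny ball. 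For fixed $t>0$, the map $(\gamma_1,\gamma_2)\mapsto \tilde\angle_x(\gamma_1(t),\gamma_2(t))$ is continuous on $\mathcal G\times\mathcal G$, and it decreases to the angle as $t\downarrow 0$ (modulo the curvature correction).

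A Dini-type argument would then give uniform convergence, provided the limit $\angle_x$ is continuous on $\mathcal G\times\mathcal G$. Upper semicontinuity is automatic in $\CAT(\kappa)$. The missing half is lower semicontinuity of the angle under uniform convergence of geodesics from $x$, and here I would use geodesic completeness a second time. Extend each $\gamma\in\mathcal G$ backwards through $x$. The angle between $\gamma$ and the backward extension of $\eta$ is then $\pi-\angle_x(\gamma,\eta)$ up to the usual triangle inequality for angles in $\Sigma_x\X$ (diameter $\pi$). Upper semicontinuity applied to these angles then yields lower semicontinuity of $\angle_x$ itself. If this trick does not close, the fallback is to argue by contradiction with a rescaled sequence and extract a Gromov--Hausdorff limit via compactness of $\Sigma_x\X$.

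Once uniform convergence holds, take $t_1=s_1$ and $t_2=s_2$ with $s_1,s_2<r$ small. Comparison angles at the actual distances within $\varepsilon'$ of the true angle give $\sfd(x_1,x_2)\le \sfd_T(\log_x x_1,\log_x x_2)+\varepsilon r$. The case where $s_1$ and $s_2$ are of different sizes follows from the same estimate applied with $t_1\ne t_2$, which is covered by the same compactness argument over $(t_1,t_2)\in(0,r]^2$. Taking $r_\varepsilon(x)$ small enough for both bounds concludes.
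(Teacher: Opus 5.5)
The paper gives no proof of this lemma; it is imported from Lytchak--Nagano, so your argument has to stand on its own. Your surjectivity argument and the lower bound via $\angle_x\le\tilde\angle^\kappa_x$ are fine, and the Dini strategy for the upper bound is the right one. However, the step you identify as the crux is argued incorrectly. From $\angle_x(\eta,\bar\eta)=\pi$, the triangle inequality in $\Sigma_x\X$ only yields $\angle_x(\gamma,\bar\eta)\ge\pi-\angle_x(\gamma,\eta)$. To turn upper semicontinuity of $\angle_x(\cdot,\bar\eta)$ into lower semicontinuity of $\angle_x(\cdot,\eta)$, you would need the reverse inequality $\angle_x(\gamma,\eta)+\angle_x(\gamma,\bar\eta)\le\pi$. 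That fails as soon as geodesics branch: in a tripod (a $\GCBA$ space) with $\gamma,\eta,\bar\eta$ in three different legs, all three angles equal $\pi$. Backward extensions are also not unique, so you would additionally have to pass to convergent subsequences of the $\bar\eta_n$. Your fallback does not rescue this either. Identifying the blow-up limit of $(\X,\lambda_i\sfd,x)$ with $T_x\X$ is, as the paper itself remarks, derived from this very lemma, so that route is circular as stated.

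The repair is easy, does not need geodesic completeness, and uses only the half you already have. If $\gamma_n\to\gamma$ uniformly in $\mathcal G$, then $\angle_x(\gamma_n,\gamma)\le\tilde\angle^\kappa_x(\gamma_n(r_0),\gamma(r_0))\to 0$. The triangle inequality in $\Sigma_x\X$ then gives $|\angle_x(\gamma_1^n,\gamma_2^n)-\angle_x(\gamma_1,\gamma_2)|\le\angle_x(\gamma_1^n,\gamma_1)+\angle_x(\gamma_2^n,\gamma_2)$, so $\angle_x$ is continuous on $\mathcal G\times\mathcal G$. Use $\kappa$-comparison angles rather than Euclidean ones ``modulo a correction''. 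In a tiny ball they are exactly nondecreasing in each parameter, so Dini applies verbatim to $f_t(\gamma_1,\gamma_2)=\tilde\angle^\kappa_x(\gamma_1(t),\gamma_2(t))$. The case $t_1\ne t_2$ reduces to the diagonal via $\angle_x(\gamma_1,\gamma_2)\le\tilde\angle^\kappa_x(\gamma_1(t_1),\gamma_2(t_2))\le f_{\max\{t_1,t_2\}}(\gamma_1,\gamma_2)$. Finally, an angle error $\varepsilon'$ costs at most $r\varepsilon'$ in distance, because $\phi\mapsto(s_1^2+s_2^2-2s_1s_2\cos\phi)^{1/2}$ is $\min\{s_1,s_2\}$-Lipschitz. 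Adding the $\kappa$-to-Euclidean correction gives the upper bound, and with these changes your argument is a complete proof.
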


Moreover, the ball $B(o_x,r) \subseteq T_x\X$ is a tiny ball for every $r\le 1$.

We introduce further standard terminology. Let $x\in \X$, $k\in \N$ and $\delta > 0$. We say that a $k$-tuple $\{v_1,\ldots,v_k\} \subseteq \Sigma_x\X$ is \emph{$\delta$-spherical} if there exists another $k$-tuple $\{\bar{v}_1,\ldots,\bar{v}_k\} \subseteq \Sigma_x\X$ such that:
\begin{itemize}
    \item for every $i=1,\ldots,k$, $\angle_x(v_i,w) + \angle_x(w,\bar{v}_i) < \pi + \delta$ for every $w\in \Sigma_x\X$;
    \item for every $1\le i\ne j \le k$, $\angle_x(v_i,v_j), \angle_x(v_i,\bar{v}_j), \angle_x(\bar{v}_i,\bar{v}_j) < \pi/2 + \delta$.
\end{itemize}
A point $x\in \X$ is said to be \emph{$(k,\delta)$-strained} if $\Sigma_x\X$ has a $\delta$-spherical $k$-tuple. For each open set $U \subset \X$, we respectively define the $(k,\delta)$-\emph{regular points} and the $\delta$-\emph{regular points} as
    $$\mathcal{R}^k_{\delta}(U) := \{x\in \X^k\cap U\,:\, \text{$x$ is $(k,\delta)$--strained}\} \quad\text{and}\quad \mathcal{R}_\delta(U) := \bigcup_{k\in \N} \mathcal{R}_\delta^k(U).$$ 
The following proposition implies, in particular, that the $\delta$-regular points are open and dense in any tiny ball for sufficiently small $\delta > 0$, and they are closely related to the regular points of the tangent cone.
\begin{proposition}
\label{prop:properties_tiny_balls}
    Let $(\X,\sfd)$ be a $\GCBA$ space and let $U\subseteq \X$ be a tiny ball. Then there exists $\delta_0 > 0$ 
    %chosen as in \cite[Section 11, page 324]{LytchakNagano2019_GCBA} depending only on $U$, 
    such that the following holds for every $0<\delta \le \delta_0$ and for every $k\in \N$.
\begin{itemize}
	\item[(i)] For every tiny ball $V$ contained in $U$, the set $\mathcal{R}^k_\delta(V)$ is open in $\X$ and $\mathcal{R}_\delta(V)$ is dense in $V$. The corresponding statements hold for every tiny ball in $T_x\X$ and $\Sigma_x\X$ for each $x \in U$.
    \item[(ii)] For every $x\in U$ and $v \in \mathcal{R}_{\delta}^{k-1}(\Sigma_x\X )$, there exists $0<\rho(x,v) <1$ with the following property: if $\gamma$ is a geodesic in $\X$ with $\alpha(\gamma) = x$ and $[\gamma] = v$, then $\gamma(t) \in \mathcal{R}_\delta^{k}(U)$ for every $0<t\le \rho(x,v)$.
    \item[(iii)] There exists a function $\eta \colon ( 0, \delta_0) \rightarrow (0,\infty)$, with $\lim_{\delta \to 0}\eta(\delta) = 0$, satisfying the following property. For every $0<\delta \le \delta_0$ and for every connected subset $B \subseteq \mathcal{R}_{\delta}(U)$, there exists $k \in \N$ such that, for every $x\in B$, there exist $r=r(x) > 0$ and a $(1+\eta(\delta))$-bi-Lipschitz open map $F_x \colon B(x,r) \to \mathbb{R}^k$.
\end{itemize}
\end{proposition}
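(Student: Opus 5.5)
This proposition collects results from Lytchak--Nagano \cite{LytchakNagano2019_GCBA}, so the plan is to extract each item from their structure theory rather than reprove it. The tiny ball $U$ gives compactness of $\overline{B}(x,10r)$ and a uniform $\CAT(\kappa)$ bound. From these we fix $\delta_0$ to be the constant from \cite[Section 11]{LytchakNagano2019_GCBA}. That constant depends only on the dimension bound $n$ of $U$, which is finite by compactness, since $\dim$ is locally bounded by \cite[Theorem 1.2]{LytchakNagano2019_GCBA}. With this uniform choice, every statement about strainers holds on $U$, on every smaller tiny ball $V\subseteq U$, and on the tangent cones and spaces of directions at points of $U$.

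For (i) we argue as follows.
\begin{itemize}
\item \textbf{Openness.} Openness of $\mathcal{R}^k_\delta(V)$ follows from two facts. First, $\delta$-spherical tuples persist under small perturbation: they lift to strainers given by nearby points, and strainer conditions are open by semicontinuity of angles in $\CAT(\kappa)$ spaces. Second, the dimension is locally constant near a $(k,\delta)$-strained point of $\X^k$, by \cite[Theorem 1.2 and Section 11]{LytchakNagano2019_GCBA}.
\item \textbf{Density.} Density of $\mathcal{R}_\delta(V)$ follows by a top-dimension argument: in a small ball, a point where the local dimension is maximal is $(k,\delta)$-strained.
\item \textbf{Tangent cones and spaces of directions.} $T_x\X$ and $\Sigma_x\X$ are again $\GCBA$, being $\CAT(0)$ and $\CAT(1)$ and geodesically complete, with dimension at most $n$. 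Hence the same argument applies to them with the same $\delta_0$.
\end{itemize}

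For (ii), take $v\in\mathcal{R}^{k-1}_\delta(\Sigma_x\X)$. A $\delta$-spherical $(k-1)$-tuple at $v$ in $\Sigma_v\Sigma_x\X$, together with the pair $v,\bar v$ (directions along $\gamma$ and its opposite in the cone $T_x\X$), yields a $\delta$-spherical $k$-tuple at the point $(t,v)$ of $T_x\X$. This uses the cone structure: $\Sigma_{(t,v)}T_x\X$ is the spherical join of $S^0$ and $\Sigma_v\Sigma_x\X$. We then transfer this to $\gamma(t)$ using Lemma \ref{lemma:logarithmic_map_convergence_to_the_tangent} and the openness of strainer conditions. For $t\le\rho$ small, the rescaled picture near $\gamma(t)$ is close to that near $(t,v)$, which gives $k$-strainedness. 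The dimension at $\gamma(t)$ must also be shown to equal $k$. This follows because the dimension of the cone at $(t,v)$ equals $1+\dim_v\Sigma_x\X=k$, and \cite[Section 11]{LytchakNagano2019_GCBA} shows that dimension passes to nearby strained points.

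For (iii), \cite[Theorem 1.2 and Section 11]{LytchakNagano2019_GCBA} gives $(1+\eta(\delta))$-bi-Lipschitz open maps near each $(k,\delta)$-strained point of $\X^k$. These maps are distance coordinates to the strainer points, and $\eta(\delta)\to0$. On a connected $B\subseteq\mathcal{R}_\delta(U)$ the dimension $k$ is locally constant by (i), hence constant, and the conclusion follows. The main obstacle is (ii), specifically making $\rho(x,v)$ work uniformly for every geodesic with direction $v$ and getting the dimension count exactly $k$. For this step I would first invoke the Lytchak--Nagano statement that regular directions produce regular points along geodesics.
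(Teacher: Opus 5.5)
There is a genuine gap in your treatment of (i) and of the choice of $\delta_0$; parts (ii) and (iii) follow the paper's route.

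\textbf{Density in (i).} Your density step, ``a point where the local dimension is maximal is $(k,\delta)$-strained'', is false. Glue a line to $\R^2$ at a point $p$. Every small ball around $p$ has maximal dimension $2$, and this is attained at $p$. But $\Sigma_p\X$ is a circle together with two points, each at angle $\pi$ from everything outside itself. Every candidate pair $v,\bar v$ therefore fails $\angle(v,w)+\angle(w,\bar v)<\pi+\delta$ for some $w$ in another component, so $p$ is not even $(1,\delta)$-strained. The quantity to maximize over an open set is the strainer number $k$, not the dimension. One then still needs the Lytchak--Nagano result that such a point has a neighbourhood in $\X^k$. The paper reproves none of this: it simply quotes Lytchak--Nagano's Corollary 11.8 for all of (i).

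\textbf{Choice of $\delta_0$.} Your uniformity claim rests on the assertion that the constant depends only on a dimension bound. The paper instead takes Lytchak--Nagano's $\delta$ from p.~324, which is fixed in terms of a doubling constant. It gets one $\delta_0$ valid simultaneously for $U$, its tiny sub-balls, and all tiny balls in $T_x\X$ and $\Sigma_x\X$ with $x\in U$, via uniform doubling: $U$ and tiny balls of $T_x\X$ are $N$-doubling, and tiny balls of $\Sigma_x\X$ are $N_1(N)$-doubling. A dimension bound does not control doubling; the cone over $m$ points is $1$-dimensional with doubling constant growing in $m$. So ``the same argument applies with the same $\delta_0$'' to $T_x\X$ and $\Sigma_x\X$ is not justified as written.

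\textbf{Part (ii).} Your ingredients are the paper's: the join structure at $(t,v)$, rescaling via $\log_x$, and stability of strainers and dimension under convergence. However, the ``main obstacle'' you flag is not one. Every geodesic $\gamma$ from $x$ with $[\gamma]=v$ satisfies $\log_x(\gamma(s))=(s,v)$, so uniformity in $\gamma$ is automatic. The paper argues by contradiction as follows.
\begin{itemize}
\item Take $s_j\to0$ and geodesics $\gamma_j$ in direction $v$ with $\gamma_j(s_j/2)\notin\mathcal{R}^k_\delta(U)$.
\item Rescale by $1/s_j$. This is the standard setting of convergence, with the logarithmic map as approximation, so $\gamma_j(s_j/2)\to(1/2,v)\in\mathcal{R}^k_\delta(T_x\X)$.
\item Lytchak--Nagano's Lemmas 11.5, 11.7 and 7.8 then force $\gamma_j(s_j/2)\in\mathcal{R}^k_\delta(U)$ for large $j$, a contradiction.
\end{itemize}
There is no ready-made Lytchak--Nagano statement that regular directions give regular points along geodesics; that is exactly what (ii) proves. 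Being in the standard setting, and applying those lemmas with the same $\delta$ on $T_x\X$, again uses the uniform doubling bounds above. Part (iii) matches the paper.
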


    In a $\GCBA$ space $(\X,\sfd)$, the tangent cone $T_x\X$ at every point $x\in \X$ is a metric cone. This implies that, for every $v\in \Sigma_x\X$, it holds that $T_{(t,v)}( T_x\X ) \cong \R \times T_v(\Sigma_x\X)$ for each $(t,v) \in T_x\X \setminus \{o_x\}$, see \cite[Lemma 2.4]{CavallucciSambusetti2026}.
    By definition, if $v\in \mathcal{R}^{k-1}_\delta(\Sigma_x\X)$, then $(t,v) \in \mathcal{R}^k_\delta(T_x\X)$ for every $t>0$. Since $\mathcal{R}_{\delta}( \Sigma_x \X)$ is dense in $\Sigma_x\X$ for small enough $\delta > 0$ by Proposition \ref{prop:properties_tiny_balls}.(i),  Proposition \ref{prop:properties_tiny_balls}.(ii) applies to a dense, open set of directions $v\in \Sigma_x\X$.
\begin{proof}
    The tiny ball $U$ is $N$-doubling in the sense of \cite[Definition 5.2]{LytchakNagano2019_GCBA}, see \cite[Proposition 5.1]{LytchakNagano2019_GCBA}. Moreover, for every $x \in U$, every tiny ball in $T_x\X$ is $N$-doubling by \cite[Corollary 5.8]{LycthakNagano2021} and every tiny ball in $\Sigma_x\X$ is $N_1$-doubling for $N_1=N_1(N)$ by \cite[Corollary 5.9]{LycthakNagano2021}. We choose $\delta_0$ as in \cite[Pag. 324]{LytchakNagano2019_GCBA}, depending only on $\max\{N,N_1\}$.
    Let $0 < \delta \leq \delta_0$. Statement (i) is immediate from \cite[Corollary 11.8]{LytchakNagano2019_GCBA}.
    
    We prove (ii). We fix $x,v$ as in the hyphothesis and we define $d_x := \sfd(x,U^c) > 0$. Suppose by contradiction that the thesis is false. Hence, we may find a sequence of positive real numbers $\{ s_j \}_j$ converging to $0$ and a sequence of geodesics $\gamma_j$ with $\alpha(\gamma_j) = x$ and $[\gamma_j] = v$ such that $\gamma_j(s_j/2) \notin \mathcal{R}^k_\delta(U)$. We may suppose that $s_j < d_x$ for every $j$. We consider the open balls $U_j$ centered at $x$ and of radius $1$ in the pointed metric space $(\X, \frac{1}{s_j}\sfd,x)$. By the previous paragraph, we are in the standard setting of convergence as defined in \cite[Definition 5.12]{LytchakNagano2019_GCBA}. The balls $U_j$ converge in the Gromov-Hausdorff sense to $B(o_x,1) \subseteq T_x\X$, in such a way that the logarithmic map defines one of the quasi-isometries; cf. Lemma \ref{lemma:logarithmic_map_convergence_to_the_tangent} and \cite[Lemma 2.5 and comment below]{CavallucciSambusetti2020}. In particular, $\gamma_j(s_j/2)$ converges to $(\frac{1}{2},v)$. We note that $(\frac{1}{2},v) \in \mathcal{R}_\delta^k(T_x\X)$ by the assumption on $v$ and the discussion before the proof. We are in position to apply \cite[Lemmas 11.5, 11.7 and 7.8]{LytchakNagano2019_GCBA}. These imply that $\gamma_j(s_j/2) \in \mathcal{R}^k_\delta(U)$ for every sufficiently large $j$. This is a contradiction, so (ii) follows.

    We prove item (iii). For every $k\in \N$, the set $\mathcal{R}_\delta^k(U)$ is open by
    (i). 
    Clearly $B$ is the union of the disjoint open sets $B\cap \mathcal{R}^k_\delta(U)$ so $B = B\cap \mathcal{R}^k_\delta(U)$ for some $k$ by connectedness of $B$. The thesis follows from \cite[Corollary 11.2]{LytchakNagano2019_GCBA} because of the choice of $\delta_0$.
\end{proof}

Given a tiny ball $U\subseteq \X$ and $\delta>0$, we define the set of $\delta$-singular points of $U$ as $\mathcal{S}_\delta(U) := U \setminus \mathcal{R}_\delta(U)$. By \cite[Corollary 11.8]{LytchakNagano2019_GCBA}, we have
\begin{equation}
    \label{eq:zero_measure_singular_set}
    \mu(\mathcal{S}_\delta(U)) = 0.
\end{equation}

%\begin{proposition}
%\label{prop:locally_biLip}
%    Let $(\X,\sfd)$ be a $\GCBA$ space, $U\subseteq \X$ be a tiny ball, and $\delta_0 >0$ be the constant of Proposition \ref{prop:properties_tiny_balls}. Then there exists a function $\eta \colon ( 0, \delta_0) \rightarrow (0,\infty)$, with $\lim_{\delta \to 0}\eta(\delta) = 0$, satisfying the following property. For every $0<\delta \le \delta_0$ and for every connected subset $B \subseteq \mathcal{R}_{\delta}(U)$, there exists $k \in \N$ such that, for every $x\in B$, there exist $r=r(x) > 0$ and a $(1+\eta(\delta))$-bi-Lipschitz open map $F_x \colon B(x,r) \to \mathbb{R}^k$.
%\end{proposition}

\section{Proof of the main theorem}
    We prove Theorem \ref{theo:intro_main_thick_length_space}. By Lemma \ref{lemma-bootstrapping-the-local-qc-property}, it is enough to prove that every tiny ball in a $\GCBA$ space is an $\infty$-thick length space. This will follow from the next results that will be proved in the forthcoming sections.

    \begin{theorem}
    \label{theorem-avoidance-irregular-part}
        Let $(\X,\sfd)$ be a $\GCBA$ space, $U\subseteq \X$ be a tiny ball and $\delta_0 >0$ be the constant of Proposition \ref{prop:properties_tiny_balls}. Then, for every $0<\delta\le \delta_0$, every $\varepsilon > 0$ and every $x,y\in U$, there exists a curve $\gamma$ joining $x$ to $y$ and contained in $U$ such that $\ell( \gamma ) \leq (1+\varepsilon) \sfd(x,y)$ and $\ell(\gamma \cap \mathcal{S}_{\delta}(U)) = 0$.
    \end{theorem}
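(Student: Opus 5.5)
The plan is to build the curve by an iterative perturbation of the geodesic $\gamma_{xy}$ inside the $\CAT(\kappa)$ ball, exploiting Proposition \ref{prop:properties_tiny_balls}(ii). Geodesics issuing from a point in a \emph{regular direction} immediately enter $\mathcal{R}_\delta(U)$ and stay there for a definite time. Since $\mathcal{R}_\delta(U)$ is open, a geodesic segment whose interior lies in $\mathcal{R}_\delta(U)$ has zero length in $\mathcal{S}_\delta(U)$ except possibly at its endpoints. So the goal is a curve made of countably many geodesic pieces whose interiors are regular, of total length at most $(1+\varepsilon)\sfd(x,y)$, accumulating only at $x$ and $y$ (a countable set has zero length measure).

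\textbf{Step 1 (one-sided construction).} Fix $x$ and a target $y$. By Proposition \ref{prop:properties_tiny_balls}(i) applied to $\Sigma_x\X$, regular directions are dense, so choose $v\in\mathcal{R}^{k-1}_\delta(\Sigma_x\X)$ with $\angle_x(v,[\gamma_{xy}])<\theta$ for a small $\theta$. By geodesic completeness and the $\CAT(\kappa)$ property of the tiny ball, there is a geodesic $\sigma$ from $x$ in direction $v$ of length $\rho(x,v)$ from (ii). Take $x_1=\sigma(t_1)$ with $t_1\le\min\{\rho(x,v),\sfd(x,y)/2\}$ small. Then $\sigma|_{(0,t_1]}\subset\mathcal{R}_\delta(U)$. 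By the comparison with the model plane (first variation), we get
\[
\sfd(x_1,y)\le \sfd(x,y)-t_1\cos\theta+o(t_1),
\]
so $t_1+\sfd(x_1,y)\le\sfd(x,y)+c\,\theta t_1$ for $t_1$ small. The excess is proportional to the distance travelled, not to $\sfd(x,y)$.

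\textbf{Step 2 (iteration).} Now $x_1$ is regular, so all nearby points are in the open set $\mathcal{R}_\delta(U)$. I would therefore not iterate at regular points in the same way. Instead, from $x_1$ follow the geodesic $\gamma_{x_1y}$ until its first hitting time of $\mathcal{S}_\delta(U)$, at a point $z_1$ (a closed set along the curve). If no hit occurs before a neighborhood of $y$, we are done on this side. Otherwise restart Step 1 at the last regular time before $z_1$ or at $z_1$ itself, with angle error $\theta_j=\theta 2^{-j}$. Summing the excesses $c\,\theta_j t_j$ gives a total excess at most $c\theta\,\sfd(x,y)$, hence length at most $(1+\varepsilon)\sfd(x,y)$ for $\theta$ small.

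\textbf{Main obstacle.} The hard part is making this procedure terminate, or converge, at $y$. We must ensure that the distances $\sfd(x_j,y)$ decrease to $0$ rather than stall at a singular accumulation point where the admissible steps $t_j$ shrink. Two ideas should handle this.

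First, I would use a two-sided scheme: run the construction from $y$ towards $x$ as well, so that the only accumulation is at the endpoints. Lemma \ref{lemma:logarithmic_map_convergence_to_the_tangent} makes the cone picture uniformly valid in $B(y,r_\varepsilon(y))$. Inside that ball, I would transplant a cone-regular ray $(t,w)$, with $w$ regular, along which the geodesic reaches $y$ radially from regular points. This gives a final segment entering $y$ in a regular direction whose interior is regular by (ii).

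Second, away from the endpoints I would use compactness. At a singular accumulation point $z$, apply (ii) at $z$ itself with a fixed regular direction close to $[\gamma_{zy}]$. This yields a uniform step $\rho(z,v)$ near $z$ for geodesics from $z$. The $\CAT(\kappa)$ continuity of geodesics, that is, continuous dependence on endpoints in a tiny ball, then transfers this step to nearby points, contradicting stalling.

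Writing the accumulation and endpoint analysis carefully, using an infinite concatenation near $y$, is where I expect the real work.
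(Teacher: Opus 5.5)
Your Step 1 is correct: it is the first variation formula in $\CAT(\kappa)$ spaces combined with Proposition \ref{prop:properties_tiny_balls}(ii). Your reduction is also correct: pieces with regular interiors and countably many singular endpoints have zero length in $\mathcal{S}_\delta(U)$. The gap is the one you flag yourself, convergence to $y$, and your proposed cure does not work.

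\begin{itemize}
\item Proposition \ref{prop:properties_tiny_balls}(ii) concerns geodesics issuing from the single point $z$ in the single direction $v$. It gives no lower bound on $\rho(z',v')$ for points $z'$ near $z$.
\item Continuity of geodesics does not transfer the bound either. Openness of $\mathcal{R}_\delta(U)$ gives a regular neighbourhood of $\sigma([s,\rho(z,v)])$ for each $s>0$. These neighbourhoods degenerate as $s\to 0$ because $\sigma(0)=z$ is singular. So geodesics from $z_j\to z$ that converge uniformly to $\sigma$ may still meet $\mathcal{S}_\delta(U)$ right after leaving $z_j$.
\item The $o(t)$ term in the first variation formula is not uniform in the base point.
\end{itemize}

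Nothing in your scheme excludes stalling. The $t_j$ only need to be ``small enough'', and the first hitting times in Step 2 can be arbitrarily short. So no contradiction is available, and a stall at some $z_\infty\neq y$ must be handled. The two-sided scheme does not help: it only treats the endpoint $y$, which was never a problem, since a single point has zero length.

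The missing idea is to continue through a stall instead of excluding it.

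\begin{itemize}
\item Fix $\theta$ (there is no need for $\theta 2^{-j}$) and put $K:=(1-c\theta)^{-1}$.
\item Call a curve $c$ starting at $x$ \emph{admissible} if $\ell(c\cap\mathcal{S}_\delta(U))=0$ and $\ell(c)\le K\,(\sfd(x,y)-\sfd(\omega(c),y))$. Order admissible curves by extension, using arclength parametrisation.
\item Every chain has an upper bound. Its union is $1$-Lipschitz on $[0,\sup\ell)$ and extends to the supremum. The preimage of $\mathcal{S}_\delta(U)$ is covered by a countable cofinal subfamily plus one point. The length inequality passes to the limit.
\item Every point $q$ of an admissible curve satisfies $\sfd(x,q)+\sfd(q,y)\le K\sfd(x,y)$. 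For $\theta$ small, this keeps the curve in a small neighbourhood of $[x,y]\subset U$, by uniqueness of geodesics in the $\CAT(\kappa)$ ball and compactness.
\item By Zorn's lemma (or a transfinite recursion, which stops at a countable ordinal since $\sfd(\cdot,y)$ strictly decreases) there is a maximal admissible curve. Your Step 1 applied at its endpoint shows that the endpoint is $y$. Step 2 then becomes unnecessary.
\end{itemize}

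With this repair your argument is a valid proof, and it differs genuinely from the paper's. The paper avoids any termination issue by applying compactness only along the fixed geodesic $[x,y]$:
\begin{itemize}
\item at each $\gamma(t)$ it takes two regular geodesics $\eta^\pm_t$ that shadow $\gamma$ forwards and backwards up to error $2\varepsilon' s$;
\item it extracts a finite chain of balls via Lemma \ref{lemma:covering_geodesics_with_balls};
\item it accepts possibly singular connecting geodesics of total length at most $2\varepsilon'\sfd(x,y)$;
\item it recursively replaces these connectors, with bad length decaying like $\xi^i$ and good pieces nested, and passes to an Arzel\`a--Ascoli limit.
\end{itemize}
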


    \begin{proposition}
    \label{proposition-regular-part}
        Let $(\X,\sfd)$ be a $\GCBA$ space, $U\subseteq \X$ be a tiny ball and $\varepsilon > 0$. Then there exists $\delta(\varepsilon)>0$ such that for every geodesic joining $x,y \in \mathcal{R}_{\delta(\varepsilon)}(U)$ in $\mathcal{R}_{\delta(\varepsilon)}(U)$ and every $\mu$-negligible subset $N\subseteq U$, there exists a curve $\gamma$ joining $x$ to $y$, contained in $\mathcal{R}_{\delta(\varepsilon)}(U)$, satisfying $\ell(\gamma) \le (1+\varepsilon)\sfd(x,y)$ and $\ell(\gamma \cap N) = 0$.
    \end{proposition}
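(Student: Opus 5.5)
The plan is to choose $\delta(\varepsilon)\le\delta_0$ so small that $1+\eta(\delta)$, with $\eta$ the function from Proposition \ref{prop:properties_tiny_balls}(iii), satisfies $(1+\eta(\delta))^2\le 1+\varepsilon/2$. We then cover the geodesic $\sigma$ from $x$ to $y$ by finitely many bi-Lipschitz charts and, inside each chart, perturb the corresponding piece of $\sigma$ by a Fubini argument in $\R^k$.

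\emph{Step 1: charts along $\sigma$.} Since $\sigma$ is compact and connected and lies in $\mathcal{R}_\delta(U)$, Proposition \ref{prop:properties_tiny_balls}(iii) gives a single dimension $k$ and, for each point $z\in\sigma$, a $(1+\eta(\delta))$-bi-Lipschitz open map $F_z\colon B(z,r(z))\to\R^k$. Shrinking $r(z)$, we may assume $B(z,r(z))\subseteq\mathcal{R}_\delta(U)$, which is open by (i). By the Lebesgue number lemma, exactly as in the proof of Lemma \ref{lemma-bootstrapping-the-local-qc-property}, we partition $\sigma$ into consecutive subsegments $\sigma_j$ from $p_j$ to $p_{j+1}$. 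We arrange that each $\sigma_j$ together with a neighbourhood of size comparable to $\ell(\sigma_j)$ lies in a single chart ball $B(z_j,r_j)$, so that the image $F_{z_j}(B(z_j,r_j))$ contains a Euclidean neighbourhood of $F_{z_j}(\sigma_j)$. It then suffices to construct, for each $j$, a curve from $p_j$ to $p_{j+1}$ inside $B(z_j,r_j)$ with length at most $(1+\varepsilon)\sfd(p_j,p_{j+1})$ and zero length in $N$; concatenating these curves gives the claim, because $\sum_j\sfd(p_j,p_{j+1})=\sfd(x,y)$.

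\emph{Step 2: Euclidean perturbation.} Write $F=F_{z_j}$, $a=F(p_j)$, $b=F(p_{j+1})$, and $N'=F(N\cap B(z_j,r_j))$. We need $\mathcal{L}^k(N')=0$. On $B(z_j,r_j)\subseteq\X^k$ the measure $\mu$ equals $\mathcal{H}^k$, and $F$ is Lipschitz, so $\mathcal{H}^k(N')=0$, which gives $\mathcal{L}^k(N')=0$. Enlarging $N'$ to a Borel null set, we consider the family of broken segments $[a,a+w]\cup[a+w,b+w]\cup[b+w,b]$ for $w$ in a small ball $B(0,\tau)\subseteq\R^{k}$; alternatively, one can parametrize only by $w$ orthogonal to $b-a$. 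By Fubini, $\int_{B(0,\tau)}\mathcal{H}^1([a+w,b+w]\cap N')\,dw=|b-a|\,\mathcal{L}^k$-measure of a null set, which is $0$. So for almost every $w$ the middle segment has zero length in $N'$.

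\emph{Step 3: the short end segments (main obstacle).} The end segments $[a,a+w]$ also have to avoid $N'$ up to length zero, and they carry the same difficulty. I would handle them by iterating: replace $[a,a+w]$ by the same construction at scale $|w|$, and repeat. This produces an infinite concatenation $\cdots\star\beta_{-1}\star\beta_0\star\beta_1\star\cdots$ converging to $a$ and $b$, of the kind defined in the preliminaries. The total extra length forms a geometric series bounded by $C\tau\le(\varepsilon/4)|b-a|$. A cleaner alternative is to use radial segments from $a$ to points $a+w$ with $w$ ranging over a small sphere: polar-coordinate Fubini shows that almost every ray from $a$ meets $N'$ in a set of zero length, because $\int_{S}\int_0^\tau\chi_{N'}(a+t\theta)\,t^{k-1}\,dt\,d\theta=0$, and hence $\chi_{N'}(a+t\theta)=0$ for a.e.\ $t$ and a.e.\ $\theta$. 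Combining the two Fubini statements, the set of admissible $w$ has full measure in a thin spherical shell, so a single good $w$ exists. The resulting Euclidean polygon has length at most $|b-a|+4\tau$. Pulling it back by $F^{-1}$, its length is at most $(1+\eta(\delta))(|b-a|+4\tau)\le(1+\eta(\delta))^2\sfd(p_j,p_{j+1})+C\tau\le(1+\varepsilon)\sfd(p_j,p_{j+1})$, and it has zero length in $N$ because the bi-Lipschitz map $F^{-1}$ preserves length-null sets. The point I expect to need the most care is in Step 1: the uniform choice of chart neighbourhoods along $\sigma$, so that the perturbation stays inside the chart domain and hence inside $\mathcal{R}_\delta(U)$, and the observation that openness of $F$ guarantees a Euclidean ball around the image segment.
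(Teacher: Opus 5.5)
Your proof is correct, but the Euclidean step is different from the paper's.

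\textbf{The paper's route.} The paper does not straighten the curve inside the chart. It keeps the image curve $\theta_i=F_i\circ\gamma_i$ and uses the $k$-parameter family $\theta_i^v(s)=\theta_i(s)+v\sin\bigl(\pi(s-a_i)/|I_i|\bigr)$.
\begin{itemize}
\item Every $\theta_i^v$ has the same endpoints.
\item For each interior $s$, the map $v\mapsto\theta_i^v(s)$ is an affine bijection of $\R^k$. So a single Fubini argument on $I_i\times\Omega_i$ shows that almost every small $v$ gives a curve with zero length in $F_i(N)$.
\item The length bound $\ell(\theta_i^v)\le\ell(\theta_i)+2|v|$ is immediate.
\item Since $|\theta_i^v-\theta_i|\le|v|$ and the compact set $\theta_i(I_i)$ sits in the open set $F_i(B(x_i,r_i))$, small $v$ keeps the curve inside the chart image automatically.
\end{itemize}
As a result, the paper never needs the chord $[a,b]$ to lie in the chart, and it needs no separate treatment of end segments.

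\textbf{What your route costs and buys.} The step you flag as delicate is exactly what the chord forces on you. You need uniform chart radii along $\sigma$. You also need the fact that an open $(1+\eta)$-bi-Lipschitz map on $B(p,R)$ covers $B(F(p),R/(1+\eta))$. Openness alone does not give this: the proof runs a continuation argument along Euclidean segments and uses completeness, which you get here from local compactness. With that in place, the rest works: the translated chord for the middle piece, plus the polar-coordinate Fubini for the two radial connectors, with the full-measure sets of good directions at $a$ and at $b$ intersected. What you gain is explicit polygonal curves. Your shrinking of the charts into $\mathcal{R}_\delta(U)$ also secures the containment claim, which the paper leaves implicit.

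\textbf{Drop the iterative option in Step 3.} Replacing $[a,a+w]$ by the same three-piece construction creates a new bad end segment at $a+w$ as well. So the procedure branches, rather than giving a concatenation that accumulates only at $a$ and $b$. The radial argument makes the iteration unnecessary.
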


    Theorem \ref{theorem-avoidance-irregular-part} allows us to find an almost geodesic curve connecting any two point in $U$ and spending zero length in the singular part. Proposition \ref{proposition-regular-part} gives a way to modify this curve in the regular part in order to spend zero length in any fixed negligible subset. We show how to conclude the proof of Theorem \ref{theo:intro_main_thick_length_space} with these results.

    \begin{proof}[Proof of Theorem \ref{theo:intro_main_thick_length_space}]
        According to the terminology of Definition \ref{definition-thick-quasiconvexity}, we have to prove that $(\X,\sfd)$ is very $\infty$-thick $(1+\varepsilon)$-quasiconvex for every $\varepsilon > 0$. Lemma \ref{lemma-bootstrapping-the-local-qc-property} implies that it is enough to prove that every tiny ball $U\subseteq \X$ is very $\infty$-thick $(1+\varepsilon)$-quasiconvex.

        We fix a tiny ball $U\subseteq \X$, with the associated $\delta_0>0$ provided by Proposition \ref{prop:properties_tiny_balls}. For every $\varepsilon > 0$, we define $\varepsilon' := (1+\varepsilon)^{\frac{1}{2}} - 1$ and we denote by $\delta(\varepsilon')>0$ the parameter obtained from Proposition \ref{proposition-regular-part}. We set $\delta_* := \min\{\delta_0,\delta(\varepsilon')\}>0$. Let $x,y\in U$ be arbitrary and let $N\subseteq \X$ be a $\mu$-negligible subset. Without loss of generality, we can suppose that $N\supseteq \mathcal{S}_{\delta_*}(U)$, by \eqref{eq:zero_measure_singular_set}.

        By Theorem \ref{theorem-avoidance-irregular-part}, we find a curve $\gamma \colon [0,1] \to \X$ joining $x$ to $y$ and contained in $U$ such that $\ell(\gamma) \le (1+\varepsilon')\sfd(x,y)$ and $\ell(\gamma \cap \mathcal{S}_{\delta_*}(U)) = 0$.
        By Proposition \ref{prop:properties_tiny_balls}.(i), the set $U\setminus\mathcal{S}_{\delta_*}(U) = \mathcal{R}_{\delta_*}(U)$ is open. Hence, $\gamma^{-1}(\mathcal{R}_{\delta_*}(U))$ is relatively open and, as such, a countable union of (relatively) open intervals. 
        
        Let $\gamma^{-1}(\mathcal{R}_{\delta_*}(U)) \setminus \{0,1\} = \bigcup_{i\in I} (a_i,b_i)$, where $I$ is a countable index set. For every $i\in I$, consider an increasing sequence $\{ t^j_i \}_{ j \in \Z }$ so that $\lim_{ j \to -\infty } t^j_i = a_i$, $\lim_{ j \to \infty } t^j_i = b_i$, and $\gamma( [t_{i}^{j},t_{i}^{j+1}] )$ is contained in a tiny ball contained in $\mathcal{R}_{\delta_{*}}(U)$ for every $i \in I$ and $j \in \Z$. By Proposition \ref{proposition-regular-part}, we find a curve $\theta^j_i$, mapping into $\mathcal{R}_{\delta_*}(U)$, and joining $\gamma(t^j_i)$ to $\gamma(t^{j+1}_i)$ such that $\ell(\theta^j_i) \le (1+\varepsilon')\sfd(\gamma(t^j_i),\gamma(t^{j+1}_i))$ and $\ell(\theta^j_i\cap N) = 0$.

        For every $i\in I$, we replace $\gamma\restr{(a_i,b_i)}$ by the infinite concatenation of the curves $\{ \theta^j_i \}_{ j \in \Z }$. We denote the resulting curve by $\tilde{\gamma}$, joining $x$ to $y$. By construction,
        \begin{equation*}
        \begin{aligned}
            \ell(\tilde{\gamma}) &\le \sum_{i\in I} \sum_{j\in \Z} (1+\varepsilon')\sfd(\gamma(t^j_i),\gamma(t^{j+1}_i)) \le (1+\varepsilon')\sum_{i\in I}\ell(\gamma\restr{(a_i,b_i)}) \\
            &\le (1+\varepsilon')^2\sfd(x,y) \le (1+\varepsilon)\sfd(x,y).
        \end{aligned}
        \end{equation*}
        Moreover,
        $$\ell(\tilde{\gamma}\cap N) = \sum_{i\in I} \sum_{j\in \Z} \ell(\theta_i^j\cap N) = 0.$$
        Theorem \ref{theo:intro_main_thick_length_space} (i) is immediate from \cite[Theorem 1.4]{GarciaBravoIkonenZhu2025}. Regarding the local $(1,\infty)$-Poincaré inequality, see \cite[Theorem 1.5]{GarciaBravoIkonenZhu2025}.
    \end{proof}

\subsection{Avoiding the singular part}

In this subsection, we prove Theorem \ref{theorem-avoidance-irregular-part}. For that, we will need the following covering lemma.

\begin{lemma}
\label{lemma:covering_geodesics_with_balls}
    Let $(\X,\sfd)$ be a metric space and let $\gamma\colon I \to \X$ be a geodesic. For every finite collection of balls $\{B(x_i,r_i)\}_{i=1}^M$ such that $\{x_i\}_{i} \subseteq \gamma(I)$ and $\gamma(I)\subseteq \bigcup_i B(x_i,r_i)$, there exists a subcollection $\{B(x_{i_j},r_{i_j})\}_{j=1}^N$ such that 
    \begin{itemize}
        \item[(i)] $\gamma^{-1}(x_{i_j}) < \gamma^{-1}(x_{i_{j+1}})$ for every $j\in \{1,\ldots,N\}$;
        \item[(ii)] $B(x_{i_j},r_{i_j})\nsubseteq B(x_{i_k},r_{i_k})$ for every $j,k\in \{1,\ldots,N\}$ with $j\neq k$.
    \end{itemize}
    In particular, the following properties hold:
    \begin{itemize}
        \item[(iii)] $\alpha(\gamma) \in B(x_{i_1},r_{i_1})$ and $\omega(\gamma) \in B(x_{i_N},r_{i_N})$;
        \item[(iv)] $r_{i_j} + r_{i_{j+1}} \ge \sfd(x_{i_j},x_{i_{j+1}})$ for every $j \in \{1,\ldots, N-1\}$.
    \end{itemize}
\end{lemma}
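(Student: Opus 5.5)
We select the subcollection greedily along the geodesic, after first discarding redundant balls. Parametrize $\gamma\colon[a,b]\to\X$ and, for each $i$, write $s_i:=\gamma^{-1}(x_i)$; this is well-defined because a geodesic is injective. Then $\gamma^{-1}(B(x_i,r_i))=(s_i-r_i,s_i+r_i)\cap[a,b]$, since $\sfd(\gamma(t),\gamma(s_i))=|t-s_i|$. The covering problem is therefore one-dimensional: finitely many open intervals $J_i:=(s_i-r_i,s_i+r_i)$ cover $[a,b]$, and it suffices to work with them.

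\textbf{Step 1 (reduction to a minimal cover).} Among all subcollections whose intervals still cover $[a,b]$, we choose one of minimal cardinality. We then relabel its members by increasing centre $s$; ties cannot occur, since two balls with the same centre are nested and one of them could be removed. This gives (i). For (ii), suppose $B(x_{i_j},r_{i_j})\subseteq B(x_{i_k},r_{i_k})$ with $j\neq k$. Then $J_{i_j}\cap[a,b]\subseteq J_{i_k}$, because the traces on $\gamma$ of the balls are exactly these intervals. Removing the $j$-th ball would then leave a smaller cover of $[a,b]$, contradicting minimality.

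\textbf{Step 2 (properties (iii) and (iv)).} For (iii), some interval contains $a$. We claim the one with the smallest centre does. Let $J$ contain $a$ and let $J'$ have a centre $s'$ that is smaller than or equal to the centre of $J$. If $a\notin J'$, then $s'-r'\ge a$ after intersecting with $[a,b]$, so $J'\cap[a,b]$ would be contained in $J\cap[a,b]$, contradicting (ii).

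Concretely, minimality means each interval contains a point $p_j\in[a,b]$ covered by no other interval. For $j<k$ we have $s_{i_j}<s_{i_k}$, and a short argument shows the private points are ordered in the same way. Indeed, if $J_{i_j}$ reaches further left than $J_{i_k}$ it must also be reached further right on $[a,b]$, or else we would get containment. The symmetric argument gives $\omega(\gamma)\in B(x_{i_N},r_{i_N})$.

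For (iv), consecutive intervals must overlap or touch. Otherwise some point of $[a,b]$ strictly between $s_{i_j}+r_{i_j}$ and $s_{i_{j+1}}-r_{i_{j+1}}$ would be covered by an interval whose centre lies outside $(s_{i_j},s_{i_{j+1}})$. Such an interval would then contain $J_{i_j}$ or $J_{i_{j+1}}$ on $[a,b]$, which is impossible by (ii). Hence $s_{i_{j+1}}-s_{i_j}\le r_{i_j}+r_{i_{j+1}}$, and this difference equals $\sfd(x_{i_j},x_{i_{j+1}})$.

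\textbf{Main obstacle.} The delicate point is the ordering argument in Step 2: showing that once nested intervals are excluded, the left endpoints, the centres and the right endpoints are all monotone in the same order. Once this is in place, the covering forces consecutive intervals to overlap. We will handle it by an elementary comparison of endpoints, taking care with the truncation at the ends of $[a,b]$.
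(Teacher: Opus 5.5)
Your proof is correct, but it follows a different route from the paper.

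\textbf{How the two routes differ.} The paper orders all balls by the position of their centres on $\gamma$ and selects greedily. It takes $i_1$ to be the largest index whose ball contains $\alpha(\gamma)$, and $i_{j+1}$ to be the largest index $m>i_j$ whose ball meets $B(x_{i_j},r_{i_j})$. It then argues that no point of $\gamma(I)$ lies in three selected balls. You instead pass to a subcover of minimal cardinality. This builds the covering property in and makes (ii) immediate. You even get the stronger fact that no trace $J_i\cap[a,b]$ is contained in another. After that, (iii) and (iv) reduce to elementary endpoint comparisons, which you carry out correctly.

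\textbf{What your route buys.} It is robust where the paper's selection rule, as literally written, need not produce a cover. Take $\X=\R$, let $\gamma$ be the identity on $[0,6]$, and take the balls $B(0,2)$, $B(3/2,1)$, $B(7/4,1/4)$, $B(5,3)$. These cover $[0,6]$. The paper's rule picks $B(0,2)$ and then $B(7/4,1/4)$. No ball of larger index meets $B(7/4,1/4)$, so $[2,6]$ is left uncovered. (The selected pair also violates (ii).) Choosing the next ball by farthest right endpoint, or simply your minimality argument, avoids this.

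\textbf{A remark on your Step 2.} The ordering you flag as the main obstacle is immediate. Suppose $s_j<s_k$ and strict monotonicity fails for the left or right endpoints. Then $|s_j-s_k|+r_j\le r_k$ or $|s_j-s_k|+r_k\le r_j$. By the triangle inequality this is a containment of the corresponding balls in $\X$, which (ii) excludes. So no care with truncation at $a,b$ is needed. The containments you derive for (iii) and (iv) are of exactly this untruncated kind, so they contradict (ii) as stated, not only minimality. The paragraph on private points is garbled and unused, and can be dropped.
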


\begin{proof}

We relabel the balls $\{B(x_i,r_i)\}_{i=1}^M$ in such a way that $\gamma^{-1}(x_i) < \gamma^{-1}(x_{i+1})$ for every $i=1,\dots,M-1$, so (i) is satisfied. We define the subset $\{i_j\}\subseteq \{1,\ldots,M\}$ such that $\{B(x_{i_j},r_{i_j})\}_j$ satisfies (ii) as follows: 
$$i_1 := \max\{m \,:\, \alpha(\gamma) \in B(x_m,r_m)\},$$ 
and given $i_j$ such that $\bigcup_{\ell=1}^j B(x_{i_\ell}, r_{i_\ell}) \nsupseteq \gamma(I)$, we define
$$i_{j+1}:=\max\{m > i_j\,:\, B(x_m,r_m)\cap B(x_{i_j},r_{i_j}) \neq \emptyset\}.$$
The family $\{B(x_{i_j},r_{i_j})\}_j$ still covers $\gamma(I)$. Moreover, if $x\in \gamma(I)$ satisfies $x\in B(x_{i_{j_1}}, r_{i_{j_1}}) \cap B(x_{i_{j_2}}, r_{i_{j_2}}) \cap B(x_{i_{j_3}}, r_{i_{j_3}})$ with $i_{j_1} < i_{j_2} < i_{j_3}$, then the maximality in the definition of $i_{j_2}$ is violated. The new cover satisfies (ii) and still satisfies (i).
Any covering satisfying (i) and (ii) satisfies also (iii) and (iv).
\end{proof}

The subsequent proposition plays a key part in the recursive construction used in the proof of Theorem \ref{theorem-avoidance-irregular-part}.
\begin{proposition}
\label{prop:recursive_construction}
    Let $(\X,\sfd)$ be a $\GCBA$ space, let $U\subseteq \X$ be a tiny ball, and let $\delta_0 >0$ be the parameter from Proposition \ref{prop:properties_tiny_balls}. Then, for every $0<\delta\le \delta_0$, every $\varepsilon > 0$ and every $x,y\in U$, there exists a curve $\gamma_\varepsilon$ with the following properties:
    \begin{itemize}
        \item[(i)] $\alpha(\gamma_\varepsilon)=x$, $\omega(\gamma_\varepsilon) = y$, $\ell(\gamma_\varepsilon) \le (1+\varepsilon)\sfd(x,y)$ and $\gamma_\varepsilon$ is contained in the $\varepsilon\sfd(x,y)$-neighbourhood of the geodesic joining $x$ to $y$;
        \item[(ii)] the curve $\gamma_\varepsilon$ is the concatenation of a finite number of geodesic segments $\{\gamma_\varepsilon^j\}_{j\in J}$ of two families:
        \begin{equation*}
            \mathcal{G} := \{j \in J:\, \ell(\gamma_\varepsilon^j \cap \mathcal{S}_\delta(U))=0\}
            \quad\text{and}\quad
            \mathcal{B} := \{ j \in J:\, \ell(\gamma_\varepsilon^j \cap \mathcal{S}_\delta(U))>0\};
        \end{equation*}
        \item[(iii)] the latter family satisfies
        $
            \sum_{j\in \mathcal{B}} \ell(\gamma_\varepsilon^j) \le \varepsilon\sfd(x,y).
        $
    \end{itemize}
\end{proposition}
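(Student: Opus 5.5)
We will prove the statement by a local perturbation argument along the geodesic $\sigma := \gamma_{xy}$, combined with the covering Lemma \ref{lemma:covering_geodesics_with_balls}. Write $D := \sfd(x,y) > 0$ (the case $x = y$ is trivial). The key local input is Proposition \ref{prop:properties_tiny_balls}.(ii): at each point $z \in \sigma$, the set of directions $v \in \Sigma_z\X$ lying in $\mathcal{R}^{k-1}_\delta(\Sigma_z\X)$ for some $k$ is open and dense. For such $v$, every geodesic issuing from $z$ in direction $v$ lies in $\mathcal{R}_\delta(U)$ on $(0,\rho(z,v)]$. Hence a geodesic from $z$ in a \emph{regular direction} is "good" near $z$: its only possible contact with $\mathcal{S}_\delta(U)$ near $z$ is the single point $z$, which has zero length.

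\textbf{Step 1 (local almost-geodesic detours).} For each $z \in \sigma$, we use Lemma \ref{lemma:logarithmic_map_convergence_to_the_tangent} to choose a small radius $r_z$. In the tangent cone $T_z\X$, the points $\log_z(\sigma)$ near $o_z$ lie on two rays of directions $v_\pm$, or one ray if $z$ is an endpoint. By density, we pick regular directions $w_\pm$ with $\angle_z(w_\pm,v_\pm)$ as small as we like. We then take points $p_\pm$ at distance $r \le \min\{r_z,\rho(z,w_\pm)\}$ from $z$ in directions $w_\pm$; these exist by geodesic completeness, extending geodesics in direction $w_\pm$. The geodesics $[z,p_\pm]$ are good. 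The cone metric together with Lemma \ref{lemma:logarithmic_map_convergence_to_the_tangent} gives $\sfd(p_\pm,\sigma(t_\pm)) \le \varepsilon' r$ for the points $\sigma(t_\pm)$ at distance $r$ from $z$ along $\sigma$.

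This suggests building the curve as follows. Inside a small ball around $z$, we replace $\sigma$ by
$$[\sigma(t_-),p_-] \star [p_-,z] \star [z,p_+] \star [p_+,\sigma(t_+)].$$
The middle two pieces are good and have total length $2r$, while the outer "connector" pieces have length at most $\varepsilon' r$. Since $\sfd(\sigma(t_-),\sigma(t_+)) = 2r$, the excess length is at most $2\varepsilon' r$.

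\textbf{Step 2 (covering).} We cover $\sigma$ by the balls $B(z,r_z/10)$ and extract a finite subcover. Lemma \ref{lemma:covering_geodesics_with_balls} then gives an ordered subfamily whose consecutive balls overlap, by (iii) and (iv), and in which no point lies in three balls.

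In each ball we perform the detour of Step 1. We link consecutive detours by geodesic segments; these linking segments are the potentially bad family $\mathcal{B}$. To keep $\sum_{\mathcal{B}}\ell \le \varepsilon D$, we arrange that consecutive detour points $p_+^{(j)}$ and $p_-^{(j+1)}$ are at distance $O(\varepsilon')$ times the local radius. We achieve this by choosing the detour scale equal to the half-distance between consecutive centers. The total length is then at most $(1+C\varepsilon')D$, because the overlap multiplicity is at most $2$, so $\sum r_{i_j} \le C D$.

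Containment in the $\varepsilon D$-neighbourhood of $\sigma$ and in $U$ follows since all pieces stay within distance $C r_{i_j} \le C\max r_z$ of $\sigma$. We choose all $r_z \le \varepsilon D/C$, and the tiny-ball geometry ($\CAT(\kappa)$, convex balls) keeps the geodesics in $U$.

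\textbf{Main obstacle.} The delicate point is the compatibility in Step 2. The radii $r_z$ and the regular directions $w_\pm$ depend on $z$ non-uniformly, and $\rho(z,w)$ can be much smaller than the overlap scale given by the covering. I would first try to fix, for each $z$, the regular directions and then $r_z \le \rho(z,w_\pm)$ before covering, so that the covering adapts to these radii. The remaining issue is to ensure that the linking segments between $p_+^{(j)}$ and $p_-^{(j+1)}$ are short relative to $r_{i_j}+r_{i_{j+1}}$. For this I would rely on property (iv) and on the almost-isometry of $\log$ at a common scale, applied at the center of the larger of the two balls. If this fails to give a small constant, a fallback is to iterate: treat each bad linking segment as a new geodesic and repeat, with the bad total length decreasing geometrically. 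This is the recursive scheme suggested by the statement's "two families" structure.
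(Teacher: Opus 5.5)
Your ingredients are the right ones: regular directions $w_\pm$ close to the directions of $\sigma$, the radius $\rho$ from Proposition \ref{prop:properties_tiny_balls}.(ii), the almost-isometry of $\log$, and the covering Lemma \ref{lemma:covering_geodesics_with_balls}. Your instinct to fix $w_\pm$ and then $r_z\le\min\{\rho(z,w_+),\rho(z,w_-),r_{\varepsilon'}(z)\}$ \emph{before} covering is also what the paper does. But the proof is not complete. The step you flag as the main obstacle is the whole content of the proposition, and the resolution you propose does not work.

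Taking the detour scale at both $z_j$ and $z_{j+1}$ equal to $\frac12\sfd(z_j,z_{j+1})$ requires $\frac12\sfd(z_j,z_{j+1})\le\min\{r_{z_j},r_{z_{j+1}}\}$. The covering only gives $\sfd(z_j,z_{j+1})\le r_{z_j}+r_{z_{j+1}}$. When $r_{z_{j+1}}\ll r_{z_j}$, the geodesic from $z_{j+1}$ in direction $w_-^{(j+1)}$ is then used beyond $\rho(z_{j+1},w_-^{(j+1)})$ and beyond the scale where Lemma \ref{lemma:logarithmic_map_convergence_to_the_tangent} applies. So you control neither its regularity nor its distance to $\sigma$; in a $\CAT(\kappa)$ space, geodesics with small initial angle can branch apart. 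Using the $\log$ chart at the larger center does not help, because that chart says nothing about a geodesic emanating from the other center. The iteration fallback is circular: (iii) is precisely the one-step estimate that the recursion in the proof of Theorem \ref{theorem-avoidance-irregular-part} consumes. You have not established even a uniform contraction factor $c<1$ for the bad length.

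The missing idea is an asymmetric split that uses property (iv) exactly.
\begin{itemize}
\item Since $\sfd(x_i,x_{i+1})\le r_i+r_{i+1}$, choose $0<s_i^+\le r_i$ and $0<s_{i+1}^-\le r_{i+1}$ with $s_i^++s_{i+1}^-=\sfd(x_i,x_{i+1})$. At the ends set $s_1^-=\sfd(x,x_1)$ and $s_N^+=\sfd(x_N,y)$.
\item Run the regular geodesic from $x_i$ forward for time $s_i^+$, and the one from $x_{i+1}$ backward for time $s_{i+1}^-$.
\item Both endpoints $p_i^+$ and $p_{i+1}^-$ then lie within $2\varepsilon'$ times their own scale of the \emph{same} point $\gamma(t_i+s_i^+)=\gamma(t_{i+1}-s_{i+1}^-)$. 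Hence the linking geodesic has length at most $2\varepsilon'(s_i^++s_{i+1}^-)$.
\item The numbers $s_i^\pm$ partition $[0,\sfd(x,y)]$. So the good pieces have total length $\sum_i(s_i^-+s_i^+)=\sfd(x,y)$ and the bad ones at most $2\varepsilon'\sfd(x,y)$, with no multiplicity argument needed.
\end{itemize}
With this split, your Step 1 shape, returning to $\sigma$ at the common point $\gamma(t_i+s_i^+)$, works equally well.
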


\begin{proof}
We fix $x,y\in U$, $0<\delta\le\delta_0$ and $\varepsilon > 0$. We set $\varepsilon' := \varepsilon/4$. Let $\gamma \colon [0,\sfd(x,y)] \to U$ be the constant-speed geodesic joining $x$ to $y$. For every $t\in (0,\sfd(x,y))$, we consider the parameter 
\begin{equation*}
 0< r_{\varepsilon'}(t) :=\min\{ t, \sfd(x,y)-t, r_{\varepsilon'}(\gamma(t))\} <1,   
\end{equation*}
where $r_{\varepsilon'}(\gamma(t))$ is provided by Lemma \ref{lemma:logarithmic_map_convergence_to_the_tangent}. We consider the points $(r_{\varepsilon'}(t),v^\pm_t) := \log_{\gamma(t)}(\gamma(t\pm r_{\varepsilon'}(t))) \in T_{\gamma(t)}\X$, where $v^\pm_t\in \Sigma_{\gamma(t)}\X$.
Moreover, for $t=0$ and $t=\sfd(x,y)$, we define $r_{\varepsilon'}(t)= \min\{r_{\varepsilon'}(\gamma(t)),\sfd(x,y)\}$, where $r_{\varepsilon'}(\gamma(t))$ is provided by Lemma \ref{lemma:logarithmic_map_convergence_to_the_tangent}, and below we consider the points $(r_{\varepsilon'}(0),v^+_0) := \log_{x}(\gamma(r_{\varepsilon'}(0))) \in T_{x}\X$ and $(r_{\varepsilon'}(\sfd(x,y)),v^-_{\sfd(x,y)}) := \log_{y}(\gamma(\sfd(x,y) - r_{\varepsilon'}(\sfd(x,y)))) \in T_{y}\X$, respectively. 

Let $t \in [0,\sfd(x,y)]$. By Proposition \ref{prop:properties_tiny_balls}.(i), we find points $w_t^\pm \in \mathcal{R}_{\delta}(\Sigma_{\gamma(t)}\X)$ such that $\angle_{\gamma(t)}(v_t^\pm, w_t^\pm) \le \varepsilon'$. Since $T_z\X$ is a metric cone for each $z \in \X$, we also have
\begin{equation}
    \sfd_T((r_{\varepsilon'}(t),w_t^\pm), (r_{\varepsilon'}(t),v_t^\pm)) \le \varepsilon' r_{\varepsilon'}(t).
\end{equation}

We apply Proposition \ref{prop:properties_tiny_balls}.(ii) to the pair $w_t^{\pm}$ and let $\rho(\gamma(t),w^{\pm}_t) > 0$ denote the given parameters.
Finally, we define $$r(t) := \min\{\rho(\gamma(t),w^+_t), \rho(\gamma(t),w^-_t), r_{\varepsilon'}(t)\} > 0.$$
Next, let $\eta^\pm_t$ be a geodesic with starting point $\alpha(\eta^\pm_t) = \gamma(t)$ and initial direction $[\eta^\pm_t] = w_t^\pm$. By Proposition \ref{prop:properties_tiny_balls}.(ii) and the cone property of $T_{\gamma(t)}\X$, it holds that
$$\eta^\pm_t(s) \in \mathcal{R}_{\delta}(U) \text{ for all } 0<s\le r(t),$$
and
$$\sfd_T((\log_{\gamma(t)}(\eta^\pm_t(s)), \log_{\gamma(t)}(\gamma(t \pm s)))) = \sfd_T((s, w^\pm_t), (s,v^\pm_t)) \le \varepsilon' s \quad \forall 0<s\le r(t).$$
By Lemma \ref{lemma:logarithmic_map_convergence_to_the_tangent}, we obtain that
\begin{equation}
    \label{eq:error_eta_regular}
    \sfd(\eta^\pm_t(s), \gamma(t \pm s)) \le 2\varepsilon' s \text{ for all } 0<s\le r(t).
\end{equation}
By compactness, we find $N$ balls $\{B(x_i, r_i)\}_{i}$ such that $\textup{Im}(\gamma)\subseteq \bigcup_{i=1}^N B(x_i, r_i)$, where $x_i = \gamma(t_i)$ for some $t_i$ and $r_i = r(t_i)$. We shorten the notation on the geodesics $\eta^\pm_{t_i}\colon [0,r_i]\to \X$ to $\eta^\pm_i$. Up to passing to a subcollection and relabeling, we may suppose that the properties of Lemma \ref{lemma:covering_geodesics_with_balls} are satisfied. In particular, $t_i < t_{i+1}$, $r_1 \ge \sfd(x,x_1)$, $r_N \ge \sfd(y,x_N)$ and $r_i + r_{i+1} \ge \sfd(x_i,x_{i+1})$ for every $i\in \{1,\ldots,N-1\}$.

Next, for $i=1$, we define $s_1^- := \sfd(x,x_1) \le r_1$.
Then, for every $i\in \{1,\ldots,N-1\}$, there exist $0<s^+_i\le r_i$ and $0< s^-_{i+1} \le r_{i+1}$ such that $s^+_i + s^-_{i+1} = \sfd(x_i,x_{i+1})$. For $i=N$, we define $s_N^+ := \sfd(x_N,y) \le r_N$.

With this setup, for every $i\in \{1,\ldots,N\}$, we define the restriction $\tilde\eta^\pm_i := \eta_i|_{ [ 0, s^{\pm}_i ] }$ and we denote by $p^\pm_i = \omega( \tilde\eta^\pm_i )$ the corresponding endpoints; recall that the starting points are $x_i = \gamma(t_i)$. Notice that $\tilde\eta_i^\pm(s) \in \mathcal{R}_{\delta}(U)$ for every $s\neq 0$, hence $\ell(\tilde\eta_i^\pm \cap \mathcal{S}_{\delta}(U)) = 0$. 
Moreover,
\begin{equation}
\label{eq:length_eta}
    \begin{aligned}
        \sum_{i=1}^N \left(\ell(\tilde{\eta}^-_i) + \ell(\tilde{\eta}^+_i)\right) &= \sum_{i=1}^N\left(\sfd(p^-_i,x_i) +\sfd(x_i,p_i^+)\right) \\&\stackrel{\eqref{eq:error_eta_regular}}{\le} \sum_{i=1}^N \sfd(\gamma(t_i-s^-_i),\gamma(t_i + s^+_i)) + 2\varepsilon'\sum_{i=1}^N (s_i^- + s^+_i) \\
        &= (1+2\varepsilon') \sum_{i=1}^N (s_i^- + s^+_i)= (1+2\varepsilon')\sfd(x,y),
    \end{aligned}
\end{equation}
where the last equality follows by the construction of $s_i^\pm$.

We need additional geodesic segments to connect the endpoints $p^{\pm}_i$ of the curves $\tilde{\eta}_i^\pm$. To this end, for $i\in \{1,\ldots,N-1\}$, we let $\theta_i$ denote the geodesic joining $p^+_i$ to $p^-_{i+1}$, with a suitably chosen domain of definition. Similarly, we consider a geodesic $\theta_0$ joining $x$ to $p^-_1$ and by $\theta_N$ a geodesic joining $p^+_N$ to $y$. Now, we denote by $\beta^{-}_i$ the composition of the inversion of the curve $\tilde{\eta}_{i}^{-}$ and a suitable translation, and, by a slight abuse of notation, denote by $\tilde{\eta}_i^{+}$ the geodesic obtained by a composition of $\tilde{\eta}_{i}^{+}$ and a translation. The aforementioned translations and domains of $\theta_j$ are chosen so that $\theta_0$ is defined on $[0, \sfd(x,p^{-}_{1})]$ and that the concatenation, up to suitable translations of the domains of definitions, 
$$\gamma_\varepsilon := \theta_0 \star ( \beta_{1}^{-} \star \tilde{\eta}_{1}^{+} )\star \dots \star \theta_{i} \star ( \beta_{i+1}^{-} \star \tilde{\eta}_{i+1}^{+} ) \star \dots \star \theta_{N-1} \star ( \beta_{N}^{-} \star \tilde{\eta}_{N}^{+} ) \star \theta_{N}$$
is well-defined. We claim that $\gamma_\varepsilon$ satisfies the thesis. To this end, we include the curves $\beta_{i}^{-}$, $\tilde{\eta}_{i}^{+}$, and those $\theta_i$ for which $\ell(\theta_i\cap \mathcal{S}_{\delta}(U)) = 0$ in the collection $\mathcal{G}$. The remaining segments $\theta_i$ are included in $\mathcal{B}$. This gives the classification stated in (ii). To see that (iii) holds, we note that the sum of the length of the curves in $\mathcal{B}$ is bounded from above by the following:
\begin{equation}
\label{eq:length_theta}
    \begin{aligned}
        \sum_{i=0}^N \ell(\theta_i) = \sfd(x,p_1^-) + \sum_{i=1}^{N-1} \sfd(p_i^+, p_{i+1}^-) + \sfd(p_N^+,y) &\le 2\varepsilon'\left(s_1^- + \sum_{i=1}^N (s_i^+ + s_{i+1}^-) + s_N^+\right)\\
        &=2\varepsilon'\sfd(x,y) \leq \varepsilon\sfd(x,y).
    \end{aligned}
\end{equation}
Moreover, combining \eqref{eq:length_eta} and \eqref{eq:length_theta}, we deduce that
$$\ell(\gamma_\varepsilon) = \sum_{i=1}^N \ell(\tilde\eta_i^\pm) + \sum_{i=0}^N \ell(\theta_i) \le (1+4\varepsilon')\sfd(x,y) \le (1+\varepsilon)\sfd(x,y).$$
Finally, \eqref{eq:error_eta_regular} implies that $\gamma_\varepsilon$ lies in the $\varepsilon\sfd(x,y)$-neighbourhood of $\gamma$.
This shows that (i) is satisfied. Since the elements of $\mathcal{B}$ and $\mathcal{G}$ are geodesics, the proof is complete.
\end{proof}

\begin{proof}[Proof of Theorem \ref{theorem-avoidance-irregular-part}] 
    Let $x,y\in U$, let $\gamma_0 \colon [0,1] \to U$ be the geodesic segment joining $x$ to $y$, and let $0<\delta\le \delta_0$ and $0<\varepsilon < \sfd(\gamma_0, X \setminus U)$. We define $\xi :=\varepsilon/(2+\varepsilon)$, so that $\frac{1+\xi}{1-\xi} = \varepsilon$. We claim that there exists a recursively defined sequence of curves $( \gamma_i \colon [0,1] \to U )_{ i }$ with the following properties:
    \begin{itemize}
        \item[(i)] $\alpha(\gamma_i) = x$, $\omega(\gamma_i) = y$, $\ell(\gamma_i) \le (\sum_{h=0}^{i-1}\xi^h)(1+\xi)\sfd(x,y)$ and $\gamma_i$ is contained in the $(\sum_{h=0}^{i-1}\xi^h)(1+\xi)\sfd(x,y)$-neighbourhood of $\gamma_0$; 
        \item[(ii)] for every $i\ge 1$, $\gamma_i$ is a concatenation of a finite number of length-minimizing curves $\{\gamma_i^j\}_{j\in J_i}$ of two types:
        \begin{equation*}
            \mathcal{G}_i:=\{j \in J_i:\, \ell(\gamma_i^j \cap \mathcal{S}_\delta(U))=0\}\quad \text{and}\quad \mathcal{B}_i:=\{ j \in J_i:\, \ell(\gamma_i^j \cap \mathcal{S}_\delta(U))>0\};
        \end{equation*}
        \item[(iii)] the latter families satisfy $\sum_{j\in \mathcal{B}_i} \ell(\gamma_i^j) \le \xi^i \sfd(x,y)$;
        \item[(iv)] the curves from the former families are nested: $\{\gamma^j_i : j \in \mathcal{G}_i\} \supseteq \{\gamma^j_{i-1} : j \in \mathcal{G}_{i-1}\}$.
    \end{itemize}
    We define $\gamma_1$ as the curve provided by Proposition \ref{prop:recursive_construction} applied to $\gamma_0$, with parameter $\xi$. Then $\gamma_1$ satisfies properties (i), (ii), (iii) and vacuously (iv). Suppose we have constructed $\gamma_i$. The new curve $\gamma_{i+1}$ is obtained by replacing each geodesic segment in $\mathcal{B}_i$ by a curve satisfying the conclusion of Proposition \ref{prop:recursive_construction} with parameter $\xi$. This is a well defined curve since the new curves in Proposition \ref{prop:recursive_construction} have the same endpoints as the original segment. The new curve $\gamma_{i+1}$ satisfies $\alpha(\gamma_{i+1})=x$, $\omega(\gamma_{i+1})=y$, and is contained in $U$, since $U$ is convex. 
    Moreover, by construction, it is a concatenation of a finite number of geodesic segments, so $\mathcal{G}_{i+1}$ and $\mathcal{B}_{i+1}$ are well-defined. On the one hand, condition (iv) is satisfied because the procedure does not modify any segments in $\mathcal{G}_i$. On the other hand, the segments in $\mathcal{B}_{i+1}$ are necessarily obtained by the replacement procedure applied to a geodesic segment in $\mathcal{B}_i$, so 
    $$\sum_{j\in \mathcal{B}_{i+1}} \ell(\gamma_{i+1}^j) \le \xi\sum_{j\in \mathcal{B}_i} \ell(\gamma_{i}^j) \le \xi^{i+1} \sfd(x,y),$$
    by property (iii) of $\gamma_i$ and Proposition \ref{prop:recursive_construction}.(iii). Since the elements of $\mathcal{G}_i$ are unchanged, properties (i) and (iii) of $\gamma_i$ and Proposition \ref{prop:recursive_construction}.(i) imply the estimate
    \begin{equation*}
    \begin{aligned}
    \ell(\gamma_{i+1}) \le \ell(\gamma_i) + (1+\xi)\xi^i\sfd(x,y) &\le (1+\xi)\left(\sum_{h=0}^{i-1} \xi^h\right)\sfd(x,y) + (1+\xi)\xi^i\sfd(x,y)\\
    &= (1+\xi)\left(\sum_{h=0}^{i} \xi^h\right)\sfd(x,y).
    \end{aligned}
    \end{equation*}
    Similarly, the triangle inequality and Proposition \ref{prop:recursive_construction}.(i) imply that $\gamma_{i+1}$ is contained in the $(1+\xi)\left(\sum_{h=0}^{i} \xi^h\right)\sfd(x,y)$-neighbourhood of $\gamma_0$. This is property (i) for $\gamma_{i+1}$.

    We have $\ell(\gamma_i) \le \frac{1+\xi}{1-\xi}\sfd(x,y)$ and $\alpha(\gamma_i) = x$ for every $i\in \N$. Therefore, we can apply Arzela--Ascoli's Theorem to the constant-speed reparametrization of $\gamma_i$ in order to find a limit curve $\gamma_\infty$ satisfying $\alpha(\gamma_\infty) = x$, $\omega(\gamma_\infty) = y$, $\ell(\gamma_\infty) \le \frac{1+\xi}{1-\xi}\sfd(x,y) = \varepsilon \sfd(x,y)$. Moreover, $\gamma_\infty$ is contained in the $\frac{1+\xi}{1-\xi}\sfd(x,y)$-neighbourhood of $\gamma_0$, hence in the $\varepsilon$-neighbourhood of $\gamma_0$. The latter neighbourhood is contained in $U$ by the choice of $\varepsilon$.
    We claim that the limit curve satisfies $\ell(\gamma_\infty \cap \mathcal{S}_{\delta}(U)) = 0$. To this end, fix an error $\lambda > 0$ and find $i_0 \in \N$ such that $\sum_{j \in \mathcal{B}_{i_0}} \ell(\gamma_{i_0}^j) < \lambda$ (it suffices to take $i_0 > \frac{\lambda}{\sfd(x,y)\log(1/\xi)}$). Because of (iv), the curve $\gamma_\infty$ contains all the segments in $\mathcal{G}_{i_0}$. Moreover, the remaining part has length at most $\frac{1+\xi}{1-\xi} \lambda$. Therefore, it holds that
    $$\ell(\gamma_\infty \cap \mathcal{S}_{\delta}(U)) \le \sum_{j\in \mathcal{G}_{i_0}}\ell(\gamma_{i_0}^j \cap \mathcal{S}_{\delta}(U)) + \frac{1+\xi}{1-\xi}\lambda = \frac{1+\xi}{1-\xi}\lambda,$$
    because of the definitions of $\mathcal{G}_{i_0}$ and $\mathcal{B}_{i_0}$. By the arbitrariness of $\lambda$, we deduce that $\ell(\gamma_\infty \cap \mathcal{S}_{\delta}(U)) = 0$.
    This shows the thesis.
\end{proof}

\subsection{Avoiding negligible sets in the regular part}
    In this section, we prove Proposition \ref{proposition-regular-part}.
\begin{proof}[Proof of Proposition \ref{proposition-regular-part}]
    Let $\varepsilon > 0$. By applying Proposition \ref{prop:properties_tiny_balls}.(iii), we find the function $\eta \colon ( 0, \delta_0 ) \to (0,\infty)$ and small enough $\delta = \delta(\varepsilon) \le \delta_0$ such that $\eta(\delta) \le (1+\varepsilon)^{1/3} - 1$. We claim that this $\delta$ satisfies the thesis.
    
    Let $x,y \in U$ such that the geodesic $\gamma\colon [0,\sfd(x,y)] \to U$ joining them is contained in $\mathcal{R}_{\delta}(U)$. Let $k\in \N$ be the integer provided by Proposition \ref{prop:properties_tiny_balls}.(iii) applied to the connected set $\textup{Im}(\gamma)$. Hence, for every $t\in [0,\sfd(x,y)]$, there exists $r(t) > 0$ and a $(1+\varepsilon)^{1/3}$-biLipschitz and open map $F_t\colon B(\gamma(t),r(t)) \to \R^k$.

    By compactness, we find $M$ balls $\{B(x_i, r_i)\}_{i}$ such that $\textup{Im}(\gamma)\subseteq \bigcup_{i=1}^M B(x_i, r_i)$, where $x_i = \gamma(t_i)$ for some $t_i$ and $r_i = r(t_i)$. The corresponding map $F_{x_i}$ will be denoted by $F_i$. Up to passing to a subcollection and relabeling, we can suppose that the properties of Lemma \ref{lemma:covering_geodesics_with_balls} are satisfied. In particular, $t_i < t_{i+1}$, $r_1 \ge \sfd(x,x_1)$, $r_M \ge \sfd(y,x_M)$ and $r_i + r_{i+1} \ge \sfd(x_i,x_{i+1})$ for every $i\in \{1,\ldots,M-1\}$. For every $i\in \{1,\ldots,M-1\}$, we consider points $y_i \in B(x_i,r_i)\cap B(x_{i+1},r_{i+1}) \cap \textup{Im}(\gamma)$. To simplify the notation, set $y_0 := x$ and $y_M := y$. Notice that for every $i\in \{1,\ldots,M\}$, the points $y_{i-1}$ and $y_{i}$ belong to the same ball $B(x_i,r_i)$. For every $i\in \{1,\ldots,M\}$, we define the interval $I_i:=[\gamma^{-1}(y_{i-1}),\gamma^{-1}(y_{i})]=[a_i,b_i]$ and let $\gamma_i\colon I_i \to U$ be the restriction of $\gamma$ to $I_i$; the image is contained in $B(x_i,r_i)$. Define $\theta_i := F_i\circ \gamma_i \colon I_i \to \R^k$. For every $v\in \R^k$, we consider the curve 
    $$\theta_i^v(s) := \theta_i(s) + v\sin\left(\frac{\pi}{\vert I_i\vert}(s-a_i)\right), \quad s\in I_i.$$
    We notice that $\alpha(\theta_i^v) = F_i(y_{i-1})$ and $\omega(\theta_i^v) = F_i(y_i)$ for every $v\in \R^k$. Since $F_i(B(x_i,r_i)) =: W_i \subseteq \R^k$ is open and $\text{Im}(\theta_i)$ is compact in $W_i$, there exists a neighbourhood $\Omega_i \subseteq \R^k$ containing $0$ such that $\theta_i^v(I_i) \subseteq W_i$ for every $v \in \Omega_i$. 

    Let $N\subseteq U$ be $\mu$-negligible. In particular, $\mathcal{H}^k(N\cap B(x_i,r_i)) = 0$. Therefore, $E_i := F_i(N \cap B(x_i,r_i))\subseteq W_i$ is negligible for the Lebesgue measure.
    Consider the set $E'_i := \{ (s,v) \in I_i \times \Omega_i \,:\, \theta^v_i(s) \in E_i \}$. For $s \in I_i$, the affine map $v \mapsto L_s(v) := \theta^v_i(s)$ is invertible a.e., so $L_{s}^{-1}(E_i) = \{ v \in \R^k \,:\, \theta^v_i(s) \in E_i \}$ is negligible for the $k$-dimensional Lebesgue measure. Thus Fubini's theorem yields the negligibility of $E'_i$ for the $(k+1)$-dimensional Lebesgue measure. Then, again by Fubini's theorem, for almost every $v \in \Omega_i$, the set $E^v_i := \{ s \in I_i \,:\, \theta_i^v(s) \in E_i \}$ is negligible for the $1$-dimensional Lebesgue measure. For these $v$'s, $\ell( \theta_i^v \cap E_i ) = 0$, and so also $\gamma^v_i := F^{-1} \circ \theta^v_i$ satisfies $\ell( \gamma^v_i \cap N ) = 0$. Let $v_i^j \in \Omega_i$ be a sequence converging to zero and satisfying $\ell(\gamma_i^{v^j_i} \cap N) = 0$. The Euclidean lengths of $\theta^{v^j_i}_i$ converge to the length of $\theta_i$ so 
    $$\ell( \gamma_i^{v^j_i} ) \leq (1+\eta)\ell(\theta_i^{v_i^j}) \le (1+\eta)^2\ell(\theta_i) \le (1+\eta)^3 \ell( \gamma_i )$$ 
    for large enough $j$. Fix such a $j$ and call the curve $\gamma_i^{v_i^j} =: \gamma_i'$. 

    We now consider the curve obtained as concatenation of the $\gamma_i'$'s, namely
    $$\gamma' := \gamma_1'\star \ldots \star \gamma_{N-1}'.$$
    By construction, $\alpha(\gamma') = x$, $\omega(\gamma') = y$, $\ell(\gamma' \cap N) = 0$ and we conclude by computing
    $$\ell(\gamma') \le \sum_{i=1}^{N-1} \ell(\gamma_i') \le (1+\eta)^3\ell(\gamma) \le (1+\varepsilon) \ell(\gamma) = (1+\varepsilon) \sfd(x,y).$$
\end{proof}

\bibliographystyle{abbrv}
\bibliography{biblio.bib}

\end{document}